%
%
%
%
\documentclass{amsart}

\theoremstyle{definition}

\theoremstyle{remark}

\numberwithin{equation}{section}




\newcommand{\CD}{\mathcal{D}}

\newcommand{\CI}{\mathcal{I}}

\newcommand{\CO}{\mathcal{O}}

\newcommand{\Ann}{{\rm Ann}}
\newcommand{\rint}{{\rm int}}
\newcommand{\rcl}{{\rm cl}}
\newcommand{\coz}{{\rm Coz}}

\newcommand{\Min}{{\rm Min}}
\newcommand{\Spec}{{\rm Spec}}

\newcommand{\sub}{\supseteq}

\newcommand{\la}{\lambda}
\newcommand{\La}{\Lambda}

\newcommand{\ga}{\gamma}

\newcommand{\al}{\alpha}

\newcommand{\de}{\delta}
\newcommand{\be}{\beta}
\newcommand{\si}{\sigma}

\newcommand{\cc}{\circ}
\newcommand{\tohi}{\emptyset}

\newcommand{\bcap}{\bigcap}

\newcommand{\ov}{\overline}

\newcommand{\RTA}{\Rightarrow}
\newcommand{\LTA}{\Leftarrow}

\def\ifif {if and only if\ \ }
\def\sub {\subseteq}

\def\set {\setminus }
\def\RTA {\Rightarrow }
\def\LTA {\Leftarrow }

\def\BBR{\mathbb{R}}

\pagestyle{myheadings}
\theoremstyle{definition}\newtheorem{thm}{Theorem}[section]
\theoremstyle{definition}\newtheorem{cor}[thm]{Corollary}
\theoremstyle{definition}\newtheorem{lem}[thm]{Lemma}
\theoremstyle{definition}\newtheorem{prop}[thm]{Proposition}
\theoremstyle{definition}\newtheorem{defn}[thm]{Definition}
\theoremstyle{definition}\newtheorem{Rem}[thm]{Remark}
\theoremstyle{remark}
\theoremstyle{definition}\newtheorem{exam}[thm]{Example}
\theoremstyle{definition}

\begin{document}
\title[On $WCF$-spaces]
{An extension of $F$-spaces and its applications}


\author{A.R. Aliabad}
\address{Department of Mathematics, Sahid Chamran University, Ahvaz, Iran\newline \indent}
\email{aliabadyr@scu.ac.ir}

\author{A. Taherifar}
\address{Department of Mathematics, Yasouj University, Yasouj, Iran}
\email{ataherifar@yu.ir, ataherifar54@gmail.com}

\subjclass[2010]{ 54C40, 13A15, 06D22}

\keywords{$F$-space, $WED$-space,  ring of continuous functions, Commutative ring, reduced ring}

\begin{abstract}
A completely regular Hausdorff space $X$ is called a $WCF$-space if every pair of disjoint cozero-sets in $X$ can be separated by two disjoint $Z^{\circ}$-sets. The class of $WCF$-spaces properly contains both the class of $F$-spaces and the class of cozero-complemented spaces. We prove that if $Y$ is a dense $z$-embedded subset of a space $X$, then $Y$ is a $WCF$-space if and only if $X$ is a $WCF$-space. As a consequence, a completely regular Hausdorff space $X$ is a $WCF$-space if and only if $\beta X$ is a $WCF$-space if and only if $\upsilon X$ is a $WCF$-space. 
We then apply this concept to introduce the notions of $PW$-rings and $UPW$-rings. A ring $R$ is called a $PW$-ring (resp., $UPW$-ring) if for all $a, b \in R$ with $aR \cap bR = 0$, the ideal $\Ann(a)+\Ann(b)$ contains a regular element (resp., a unit element). It is shown that $C(X)$ is a $PW$-ring if and only if $X$ is a $WCF$-space, if and only if $C^{*}(X)$ is a $PW$-ring. Moreover, for a reduced $f$-ring  $R$ with bounded inversion, we prove that the lattice $BZ^{\circ}(R)$ is co-normal if and only if $R$ is a $PW$-ring. Several examples are provided to illustrate and delimit our results. 
\end{abstract}
\maketitle
\section{Introduction}
In this paper, all topological spaces are assumed to be completely regular Hausdorff, and all rings are commutative with unity.  
It is well known that the collection of all cozero-sets in a completely regular Hausdorff space $X$ forms a base for the open sets.  
This highlights the fundamental role of cozero-sets in the characterization of such spaces.  
Moreover, cozero-sets have been used for introducing and studding of several important classes of spaces, such as $F$-spaces, $F^{\prime}$-spaces, and cozero-complemented spaces (see \cite{GJ, GM, GME, NH, HW, HVW}).  
 In addition, the notion of a $WED$-space were introduced in \cite{AT} and \cite{DT},   
in which, every pair of disjoint open sets in $X$ can be separated by two disjoint $Z^{\circ}$-sets.  

Motivated by these considerations, we introduce a new class of spaces, called $WCF$-spaces. In the definition of a $WED$-space, open sets are replaced by cozero-sets, which leads to a broader class of topological spaces. We show that the class of $WCF$-spaces properly contains the classes of $F$-spaces, cozero-complemented spaces, and $WED$-spaces. In Section~2, we recall the necessary background and fix the notation to be used throughout the paper.

In Section~3, we investigate several topological properties of $WCF$-spaces. Examples are provided to illustrate the significance of the subject. It is proved that if $Y$ is a dense and $Z$-embedded subset of a topological space $X$, then $X$ is a $WCF$-space if and only if $Y$ is a $WCF$-space (Theorem~\ref{sara}). As a consequence, every dense and $C^{*}$-embedded subset of a $WCF$-space is itself a $WCF$-space. Hence, we deduce that $X$ is a $WCF$-space if and only if $\beta X$ is a $WCF$-space if and only if $\upsilon X$ is a $WCF$-space.  

In Section~4, we address the question: ``What is $C(X)$ when $X$ is a $WCF$-space?'' This leads us to introduce new classes of commutative rings. A ring $R$ is called a $PW$-ring (resp., $UPW$-ring) if, for each $a,b\in R$ with $aR\cap bR=0$, the ideal $\Ann(a)+\Ann(b)$ contains a regular element (resp., a unit element). We show that if $\{R_{\alpha} : \alpha \in S\}$ is a family of rings, then the product ring $R=\prod_{\alpha\in S} R_{\alpha}$ is a $PW$-ring if and only if each $R_{\alpha}$ is a $PW$-ring (Proposition~\ref{has}). For a reduced ring $R$, we prove that $R$ is a $W$-ring if and only if, for each ideal $I$ of $R$, the ideal $\Ann(I)+\Ann^{2}(I)$ contains a regular element (Theorem~\ref{AR1}). Moreover, we show that if $R$ is a reduced $f$-ring with bounded inversion, then $R$ is $PW$ (resp., $UPW$) if and only if its bounded part is $PW$ (resp., $UPW$) (Proposition~\ref{Du}). Finally, for a reduced (resp., semiprimitive) $f$-ring with bounded inversion, we establish an equivalent condition for the co-normality of the lattice $BZ^{\circ}(R)$ (resp., $BZ(R)$) (Propositions~\ref{2} and \ref{es2}).

\section{Background and Notation}
\subsection{Rings of Continuous Functions and Topological Concepts}

In this paper, $C(X)$ (resp., $C^{*}(X)$) denotes the ring of all (resp., all bounded) real-valued continuous functions on a completely regular Hausdorff space $X$. For each $f \in C(X)$, the set $f^{-1}(\{0\})$ is called the \emph{zero-set} of $f$, and is denoted by $Z(f)$.  A $Z^{\circ}$-set in $X$ is the interior of a zero-set in $X$. A Coz$(f)$ is the set $X\setminus Z(f)$, which is called the cozero-set of $f$. The set of all open subsets of a space $X$ is denoted by $\CO(X)$. 
The space $\beta X$ is known as the \emph{Stone--\v{C}ech compactification} of $X$. It is characterized as the compactification of $X$ in which $X$ is $C^{*}$-embedded as a dense subspace. The space $\upsilon X$ is the \emph{realcompactification} of $X$, in which $X$ is $C$-embedded as a dense subspace. For a completely regular Hausdorff space $X$, we have 
\[
X \subseteq \upsilon X \subseteq \beta X.
\]
Recall from \cite{HW} that a topological space $X$ is cozero-complemented space if for each $f\in C(X)$, there is a $g\in C(X)$ such that the union of their cozero-sets is dense and the intersection of their cozero-sets is empty. 

A topological space $X$ is  called an \emph{$F$-space} when every finitely generated ideal of $C(X)$ is principal.  A  space $X$ is quasi $F$-space if each dense cozero-set of $X$ is $C^{*}$-embedded in $X$.  
We now state two useful lemmas that will be needed in the sequel.
\begin{lem}[{\cite[14.N]{GJ}}]\label{p1}
A space $X$ is an $F$-space if and only if any two disjoint cozero-sets are completely separated.
\end{lem}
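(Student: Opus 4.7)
The plan is to prove each direction by an ideal-theoretic computation in $C(X)$, exploiting the standard connection in an $F$-space between principality of finitely generated ideals and factorizations of continuous functions by their absolute values.

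For the forward direction, assume $X$ is an $F$-space and let $U = \coz(f)$, $V = \coz(g)$ be disjoint cozero-sets. After replacing $f, g$ by $|f|, |g|$, I may take $f, g \ge 0$ with $fg = 0$, so that $|f - g| = f + g$. I will appeal to the standard $F$-space identity $(h) = (|h|)$ (which follows from the factorization $h = k|h|$ for some $k \in C(X)$ available in any $F$-space) applied to $h = f - g$. This yields $(f + g) = (f - g)$, and in particular there exists $k \in C(X)$ with $f + g = k(f - g)$. A pointwise evaluation gives $k \equiv 1$ on $U$ (where $g$ vanishes) and $k \equiv -1$ on $V$ (where $f$ vanishes). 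Truncating, the function $\tfrac{1}{2}\bigl((k \vee -1) \wedge 1 + 1\bigr) \in C(X)$ is $1$ on $U$ and $0$ on $V$, completely separating them.

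For the converse, assume that any two disjoint cozero-sets of $X$ are completely separated. It suffices to establish that the two-generator ideal satisfies $(f,g) = (|f| + |g|)$ for all $f,g \in C(X)$; principality of arbitrary finitely generated ideals follows by induction. Applying the hypothesis to the disjoint cozero-sets $\{f > 0\}$ and $\{f < 0\}$ produces a function $s_f \in C^{*}(X)$ with $|s_f| \le 1$, $s_f \equiv 1$ on $\{f > 0\}$, and $s_f \equiv -1$ on $\{f < 0\}$, so that $f = s_f|f|$ pointwise; analogously for $g$. The inclusion $(|f| + |g|) \subseteq (f, g)$ is then immediate. For the reverse inclusion, one writes $f = u(|f| + |g|)$ on $\coz(|f| + |g|) = \coz(f) \cup \coz(g)$ and argues that the coefficient $u$ extends continuously to all of $X$ by a further application of the complete-separation hypothesis to suitably chosen disjoint cozero-subsets built from the positive and negative parts of $f$ and $g$; the same argument handles $g$.

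The main obstacle is this continuous-extension step in the converse. The candidate coefficient $f/(|f|+|g|)$ is bounded and manifestly continuous on $\coz(|f|+|g|)$, but securing continuity at its boundary requires an additional careful application of complete separation to new pairs of disjoint cozero-sets. The forward direction is comparatively routine once the $F$-space identity $(h) = (|h|)$ is invoked to produce the factoring function $k$.
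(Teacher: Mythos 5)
The paper does not prove this lemma at all---it is quoted verbatim from Gillman--Jerison \cite[14.N]{GJ}---so your attempt can only be judged on its own merits. Your forward direction is sound: assuming the standard $F$-space factorization $h=k\lvert h\rvert$, the evaluation of $k$ on the two cozero-sets and the truncation $\tfrac12\bigl((k\vee -1)\wedge 1+1\bigr)$ do completely separate them. (Be aware, though, that ``$h$ is a multiple of $\lvert h\rvert$'' is itself one of the equivalent characterizations of $F$-spaces, so from the definition used in this paper---every finitely generated ideal of $C(X)$ is principal---it is cleaner to work directly with a principal generator $d$ of $(f,g)$ for $f,g\ge 0$ with $fg=0$: writing $f=ad$, $g=bd$, $d=sf+tg$ forces $sa+tb=1$ on $\coz d$ and $ab=0$ on $\coz d$, so the truncation of $sa$ already separates $\coz f$ from $\coz g$ with no appeal to the factorization lemma.)

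The converse, however, contains a genuine gap that you yourself flag but do not close. Reducing to $(f,g)=(\lvert f\rvert+\lvert g\rvert)$ is the right idea, and the inclusion $(\lvert f\rvert+\lvert g\rvert)\subseteq(f,g)$ via $f=s_f\lvert f\rvert$ is fine; but the reverse inclusion requires extending the bounded function $f/(\lvert f\rvert+\lvert g\rvert)$, defined on the cozero-set $\coz(\lvert f\rvert+\lvert g\rvert)$, continuously to all of $X$, and this is precisely the substantial content of the implication (it amounts to showing that cozero-sets are $C^{*}$-embedded). A ``further application of complete separation to suitably chosen disjoint cozero-subsets built from the positive and negative parts of $f$ and $g$'' cannot do this: a single separation produces one function taking prescribed values on two sets, whereas the extension problem requires controlling $f/(\lvert f\rvert+\lvert g\rvert)$ near every boundary point of the cozero-set, and the known proof goes through Urysohn's extension theorem \cite[1.17]{GJ} --- one must verify that any two sets completely separated \emph{inside} the cozero-set are completely separated in $X$ (which needs an argument to realize the relevant subsets as traces of cozero-sets of $X$), and the resulting extension is built from a countable iteration of separations combined in a uniformly convergent series, not from one application of the hypothesis. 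As written, the hard half of the lemma is asserted rather than proved.
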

\begin{lem}[{\cite[Lemma 2.10]{HVW}}]\label{p2}
A space $X$ is a quasi $F$-space if and only if any two disjoint $Z^{\circ}$-sets in $X$ have disjoint closures.
\end{lem}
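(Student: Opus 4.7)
The plan is to prove the equivalence by translating between the $Z^\circ$-set condition and $C^*$-extensions across a dense cozero-set, using the dictionary $\text{int}(Z(f))=X\setminus\overline{\coz(f)}$.

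For the forward direction, I would start with two disjoint $Z^\circ$-sets $U_i=\text{int}(Z(f_i))$, $i=1,2$, and produce the bounded function that the quasi $F$-property is meant to extend. Since $\text{int}(Z(f_1)\cap Z(f_2))=U_1\cap U_2=\tohi$, the cozero-set $C:=\coz(f_1^2+f_2^2)=\coz(f_1)\cup\coz(f_2)$ is dense in $X$. Define $\vp\in C^*(C)$ by $\vp=f_1^2/(f_1^2+f_2^2)$. A short argument (using that any open subset of $U_1$ contained in $Z(f_2)$ would have to sit inside $\text{int}(Z(f_2))=U_2$) shows that $U_1\cap\coz(f_2)$ is dense in $U_1$; and $\vp\equiv 0$ there. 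Symmetrically $\vp\equiv 1$ on the dense subset $U_2\cap\coz(f_1)$ of $U_2$. Since $X$ is quasi $F$, $\vp$ extends to $\bar\vp\in C(X)$, and continuity forces $\bar\vp=0$ on $\overline{U_1}$ and $\bar\vp=1$ on $\overline{U_2}$, so $\overline{U_1}\cap\overline{U_2}=\tohi$.

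For the converse, I would let $C=\coz(f)$ be a dense cozero-set and show $C$ is $C^*$-embedded via Urysohn's extension theorem: it suffices to show that any two completely separated subsets of $C$ are completely separated in $X$. If $A,B\sub C$ are completely separated in $C$, pick $h\in C^*(C)$ with $h=0$ on $A$, $h=1$ on $B$, and set $A'=h^{-1}([0,1/3])$, $B'=h^{-1}([2/3,1])$, which are zero-sets of $C$. The key geometric step is to find $Z^\circ$-sets $V_A,V_B$ of $X$ with $A\sub V_A$ and $B\sub V_B$ and $V_A\cap V_B=\tohi$; the hypothesis then gives $\overline{V_A}\cap\overline{V_B}=\tohi$, yielding a complete separation in $X$. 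The natural candidates are built from the zero-set $X\setminus C=Z(f)$ together with continuous extensions of $\min(h,1/3)$ and $\min(1-h,1/3)$ (each multiplied by $f$ to extend continuously by $0$ across $Z(f)$), whose interiors of vanishing sets give the required disjoint $Z^\circ$-sets.

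The main obstacle is the converse. The forward direction reduces quickly once one spots the right bounded function, but in the converse one must carefully manufacture $Z^\circ$-sets in $X$ out of data living only on the dense cozero-set $C$; making the multiplications by $f$ (or similar truncation tricks) give genuinely disjoint $Z^\circ$-sets rather than merely disjoint zero-sets is the delicate point, and it is precisely where the hypothesis on closures — rather than just the sets themselves — gets used.
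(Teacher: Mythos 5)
First, note that the paper offers no proof of this lemma: it is quoted directly from \cite[Lemma 2.10]{HVW}, so there is no internal argument to compare yours with and your proposal must stand on its own. Your forward direction does stand. Since $\rint Z(f_1)\cap\rint Z(f_2)=\rint\big(Z(f_1)\cap Z(f_2)\big)=\emptyset$, the cozero-set $C=\coz(f_1^2+f_2^2)$ is dense; $\vp=f_1^2/(f_1^2+f_2^2)$ is bounded and continuous on $C$, vanishes on $U_1\cap C$ (which is dense in $U_1$ simply because $C$ is dense and $U_1$ is open --- you do not even need the subtler density of $U_1\cap\coz(f_2)$), and equals $1$ on $U_2\cap C$; so the quasi-$F$ extension is $0$ on $\overline{U_1}$ and $1$ on $\overline{U_2}$, forcing $\overline{U_1}\cap\overline{U_2}=\emptyset$. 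That half is complete.

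The converse, however, has a genuine gap at its final step. Your construction of the $Z^\circ$-sets is fine: for $f\ge 0$ the function $f\cdot\max(h-1/3,0)$ extends continuously by $0$ over $Z(f)$, its zero-set is $Z(f)\cup h^{-1}([0,1/3])$, and the two resulting interiors are disjoint merely because $Z(f)$ is nowhere dense --- so, contrary to what you suggest, the closure hypothesis is not what makes them disjoint. The hypothesis then gives $\overline{V_A}\cap\overline{V_B}=\emptyset$; but in a Tychonoff space two disjoint closed sets need not be completely separated, so the claim that this ``yield[s] a complete separation in $X$'' is a non sequitur, and it is precisely the input Urysohn's extension theorem requires. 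To repair this you must invoke the hypothesis infinitely often rather than once. One route: for dyadic $t$ set $V_t=\rint Z\big(f\cdot\max(h-t,0)\big)$ and $W_s=\rint Z\big(f\cdot\max(s-h,0)\big)$; for $t<t'\le t''$ the sets $V_t$ and $W_{t'}$ are disjoint $Z^\circ$-sets, whence $\overline{V_t}\subseteq X\setminus\overline{W_{t'}}\subseteq V_{t''}$, and the standard function $u(x)=\inf\{t:x\in V_t\}$ is then continuous and separates $A$ from $B$. Alternatively, bypass Urysohn extension entirely: if the cluster set of $h$ at some $x\in X$ contained two values $a<b$, then for $a<r<s<b$ the point $x$ would lie in $\overline{V_r}\cap\overline{W_s}$ with $V_r\cap W_s=\emptyset$, contradicting the hypothesis; so each cluster set is a singleton and defines the continuous extension directly. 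Either repair works, but as written the converse is incomplete.
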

\subsection{Rings} 
As mentioned in the Introduction, throughout this paper all rings are assumed to be commutative with identity. For a subset $S$ of a ring $R$, we denote by $\Ann(S)$ the annihilator of $S$ in $R$, and by $\langle S \rangle$ the ideal of $R$ generated by $S$. The set of all ideals of a ring $R$ is denoted by $\mathcal{I}(R)$. For each $a \in R$, we denote by $M_a$ (resp., $P_a$) the intersection of all maximal (resp., minimal prime) ideals of $R$ containing $a$. An ideal $I$ of a ring $R$ is called $z$-ideal (resp., $z^{\circ}$)-ideal if $M_a\subseteq I$ (resp., $P_a\subseteq I$) for each $a\in I$. The smallest $z^{\circ}$-ideal containing an ideal $I$ is denoted by $I_{\circ}$.  A ring $R$ is called \emph{reduced} if it has no nonzero nilpotent elements, and \emph{semiprimitive} if $J(R)=0$, i.e., the intersection of all maximal ideals of $R$ is zero. 

Recall that a \emph{McCoy ring} is a ring in which the annihilator of any finitely generated ideal consisting  of zerodivisors is the zero ideal.  In Huckaba's book \cite{Huc}, rings with this feature are said to satisfy Property (A). 

Recall that an \emph{$f$-ring} is a lattice-ordered ring $A$ such that for all $a,b \in A$ and $c \geq 0$, we have
\[
c(a \vee b) = (ca) \vee (cb).
\]
An element $c \in A$ is called \emph{positive} if $c \geq 0$. In particular, squares are positive in $f$-rings. An $f$-ring is said to have \emph{bounded inversion} if every element greater than $1$ is invertible. Every   $C(X)$ is a reduced $f$-ring with bounded inversion. For $a \in A$, the \emph{absolute value} of $a$, denoted by $\lvert a \rvert$, is defined as 
\[
\lvert a \rvert = a \vee (-a),
\]
which is always positive. 

In \cite{T}, it was shown that if $R$ is a reduced $f$-ring with bounded inversion, then the set 
\[
BZ(R) = \{M_f : f \in R\},
\]
partially ordered by inclusion, forms a distributive lattice with operations
\[
M_a \vee M_b = M_{a^2+b^2}, \qquad M_a \wedge M_b = M_{ab}.
\]
Moreover, the set 
\[
BZ^{\circ}(R) = \{P_f : f \in R\},
\]
partially ordered by inclusion, also forms a distributive lattice with operations
\[
P_a \vee P_b = P_{a^2+b^2}, \qquad P_a \wedge P_b = P_{ab}.
\]
Further results concerning these lattices of ideals are given in \cite{TA, T1}.

Recall from   \cite{AT}, \cite{BB1}, \cite{C}, and \cite{TA}  that a lattice $<L, \wedge, \vee, 0, 1>$ is called a co-normal lattice whenever it is a distributive lattice and for all $a, b\in L$ with $a\wedge b=0$ there exist $x, y\in L$ such that $x\vee y=1$ and $x\wedge a=y\wedge b=0$.  Trivially, every Boolean algebra is a co-normal lattice.

In this paper, we use $\Spec(R)$ (resp., $\Min(R)$) for the
spaces of prime ideals (resp., minimal prime ideals) of $R$ with the
$hull$-$kernel$ topology. For a subset $S$ of $R$, let $h(S)=\{P\in\Spec(R): S\subseteq P\}$. If $S=\{a\}$, then we use $h(a)$. The set $\{h(a): a\in R\}$ forms a base for closed sets in $\Spec(R)$. $\Min(R)$ is a subspace of $\Spec(R)$, and we use $h_{m}(S)$ instead of   $h(a)\cap\Min(R)$.
We need the following lemmas in the sequel. 
\begin{lem}\label{p3}
Let $I, J$ be two ideals of a reduced ring $R$ and $Y=\Min(R)$. 
\begin{enumerate}
\item $\Ann(I)=\Ann(J)\quad \text{\ifif} \rint_{Y}h_m(I)=\rint_{Y}h_m(J).$

\item For each $S\subseteq R$, $h_m(S)=\rint_{Y}h_m(S)$.
\end{enumerate}
\end{lem}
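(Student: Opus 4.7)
My plan is to reduce both parts to a single key identity: for every subset $S\subseteq R$ of a reduced ring $R$,
\[
\rint_Y h_m(S)\;=\;\{P\in\Min(R):\Ann(S)\not\subseteq P\}.
\]
The $\supseteq$-direction is direct: if $b\in\Ann(S)\setminus P$, then the basic open set $Y\setminus h_m(b)$ is a neighborhood of $P$ contained in $h_m(S)$, because any $Q\in\Min(R)$ with $b\notin Q$ and $bS=0\subseteq Q$ must satisfy $S\subseteq Q$. The $\subseteq$-direction is where reducedness is essential: given $P\in\rint_Y h_m(S)$, pick a basic open $Y\setminus h_m(b)\ni P$ sitting inside $h_m(S)$, and to see $b\in\Ann(S)$, take $a\in S$ and suppose $ab\ne 0$. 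Since $\bigcap\Min(R)=0$ in a reduced ring, some $Q\in\Min(R)$ omits $ab$, so $a,b\notin Q$; but then $Q\in Y\setminus h_m(b)\subseteq h_m(S)$, forcing $a\in Q$, a contradiction. This two-case/reducedness device is the main obstacle to set up carefully.

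Granting the identity, Part~(1) needs one further ingredient, namely the representation
\[
\Ann(I)=\bigcap\{P\in\Min(R):\Ann(I)\subseteq P\}
\]
valid in every reduced ring. I would prove this by the same two-case split: for $r$ in the right-hand side and any $Q\in\Min(R)$, either $\Ann(I)\subseteq Q$ (whence $r\in Q$), or one picks $s\in\Ann(I)\setminus Q$, which forces $I\subseteq Q$ and so $ra\in Q$ for every $a\in I$; either way $rI\subseteq\bigcap_Q Q=0$. The hypothesis $\rint_Y h_m(I)=\rint_Y h_m(J)$ then translates, via the key identity, into saying that $\Ann(I)$ and $\Ann(J)$ lie in exactly the same minimal primes of $R$, and the representation above yields $\Ann(I)=\Ann(J)$; the converse direction is immediate.

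For Part~(2), the governing observation is that each basic piece $h_m(a)$ is clopen in $Y$: it is closed as $h(a)\cap Y$, and open because the key identity applied to $S=\{a\}$ realizes it as $\bigcup_{b\in\Ann(a)}(Y\setminus h_m(b))$. This gives $h_m(a)=\rint_Y h_m(a)$ at once. The general statement is then extracted by writing $h_m(S)=\bigcap_{a\in S}h_m(a)$ as an intersection of clopen basic pieces and invoking the key identity to pin down the interior explicitly. The recurring engine throughout the proof is the two-case reducedness trick, and the unique difficulty is to keep track of how the reduction to $\Ann(S)$ is performed in the $\subseteq$-direction of the key identity; once that is in hand the remaining bookkeeping is essentially formal.
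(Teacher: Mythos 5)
The paper states Lemma~\ref{p3} without proof (it is listed among ``well-known'' facts needed later), so there is no in-paper argument to compare against and your proposal must stand on its own. Most of it does: your key identity $\rint_Y h_m(S)=\{P\in\Min(R):\Ann(S)\not\subseteq P\}$ is correct, and both directions are argued soundly (the $\supseteq$ direction via the basic open set $Y\setminus h_m(b)$ with $b\in\Ann(S)\setminus P$, the $\subseteq$ direction via primeness and $\bigcap\Min(R)=0$ in a reduced ring). Part~(1), resting on this identity together with the representation $\Ann(I)=\bigcap\{P\in\Min(R):\Ann(I)\subseteq P\}$, is complete and correct.

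Part~(2) has a genuine gap, and indeed the statement as written is false for arbitrary $S$. Your argument correctly shows each $h_m(a)$ is clopen (this is exactly Lemma~\ref{p4} combined with your key identity), but an \emph{infinite} intersection of clopen sets need not be open, and ``invoking the key identity'' cannot repair this: the identity gives $\rint_Y h_m(S)=\{P:\Ann(S)\not\subseteq P\}$, which is always contained in $h_m(S)=\{P:S\subseteq P\}$ but in general properly. Concretely, take $R=\prod_{n\in\mathbb{N}}\mathbb{Z}_2$ and $S=\{e_n:n\in\mathbb{N}\}$, where $e_n$ is the idempotent supported at the $n$-th coordinate. Then $\Ann(S)=0$, so your identity yields $\rint_Y h_m(S)=\emptyset$, whereas $h_m(S)$ is the nonempty set of minimal primes corresponding to free ultrafilters on $\mathbb{N}$ (a closed set with empty interior, since the principal-ultrafilter primes are dense in $\Min(R)\cong\beta\mathbb{N}$). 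What your singleton argument does establish, and what is actually true, is the case of finite $S$ (equivalently, finitely generated $\langle S\rangle$): if $\langle S\rangle=(a_1,\dots,a_n)\subseteq P$, then $\Ann(a_1)\cdots\Ann(a_n)\subseteq\Ann(\langle S\rangle)$ and primeness of $P$ forces $\Ann(\langle S\rangle)\not\subseteq P$. Since the paper only ever invokes Part~(2) for a single element (in the proof of Theorem~\ref{AR1}), your argument covers every actual use; but as a proof of the lemma as stated it does not close, and the honest fix is to restate Part~(2) for finite $S$.
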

\begin{lem}\label{p4}
Let $R$ be a reduced ring. Then $P\in\Min(R)$ \ifif for each $a\in P$ there exists $c\not\in P$ such that $ac=0$ (i.e., $\Ann(a)\not\subseteq P$) .
\end{lem}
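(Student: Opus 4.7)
The plan is to prove both directions by the standard minimal-prime argument, the key new ingredient being that reducedness lets me upgrade the usual nilpotence conclusion $sa^n=0$ to the clean annihilator identity $sa=0$.

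For ($\RTA$), suppose $P\in\Min(R)$ and fix $a\in P$. I would consider the multiplicatively closed subset
\[
T=\{\,sa^n : s\in R\setminus P,\ n\ge 0\,\}.
\]
If $0\notin T$, then by Zorn's lemma the collection of ideals of $R$ disjoint from $T$ has a maximal element $Q$, which is automatically prime; since $T\supseteq R\setminus P$ we get $Q\sub P$, and since $a=1\cdot a\in T$ we get $a\notin Q$, so $Q$ is a prime strictly smaller than $P$, contradicting minimality. Hence $0\in T$, so $sa^n=0$ for some $s\notin P$ and some $n\ge 0$. The case $n=0$ forces $s=0\in P$, so $n\ge 1$. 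Multiplying by $s^{n-1}$ gives $(sa)^n=s^na^n=0$, and since $R$ is reduced this yields $sa=0$. As $s\notin P$, this gives $\Ann(a)\nsub P$.

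For ($\LTA$), assume the annihilator condition and let $Q$ be any prime ideal with $Q\sub P$; I want $Q=P$. For each $a\in P$ pick $c\notin P$ with $ca=0$; then $ca\in Q$, and since $c\notin P$ forces $c\notin Q$ (because $Q\sub P$), primality of $Q$ yields $a\in Q$. Hence $P\sub Q$, so $Q=P$ and $P\in\Min(R)$.

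The only genuinely delicate point is the passage from $sa^n=0$ to $sa=0$ using reducedness; everything else is the usual Zorn-plus-multiplicative-set pattern, so I do not anticipate a real obstacle here. An alternative, more conceptual route would be to observe that for reduced $R$ and $P\in\Min(R)$ the localization $R_P$ is a field (its unique prime $PR_P$ is the nilradical of the reduced ring $R_P$, hence zero), from which $a\in P$ immediately gives $sa=0$ for some $s\notin P$; I would mention this as a slicker alternative if space permits.
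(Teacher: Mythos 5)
Your proof is correct. The paper itself offers no proof of this lemma: it is recalled as a known fact (it is the classical characterization of minimal primes in reduced rings going back to Henriksen--Jerison and appearing in Huckaba's book), so there is no argument in the paper to compare against; yours is exactly the standard one. Both halves check out: $T=\{sa^n : s\notin P,\ n\ge 0\}$ is multiplicatively closed because $P$ is prime, the Zorn/maximal-ideal-disjoint-from-$T$ step is the usual one, and the passage $(sa)^n=s^{n-1}(sa^n)=0\Rightarrow sa=0$ is where reducedness is genuinely used. The one point worth making explicit is that the lemma tacitly assumes $P$ is a \emph{prime} ideal: your converse shows that every prime $Q\subseteq P$ equals $P$, which yields $P\in\Min(R)$ only once one knows $P$ is itself prime (for a non-prime ideal the annihilator condition can hold, e.g.\ $P=0$ in $\mathbb{Z}\times\mathbb{Z}$, without $P$ being a minimal prime). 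Your alternative via $R_P$ being a field is also valid and is a nice one-line version of the forward direction.
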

\section{A new extension of $F$-spaces and  cozero-complemented spaces}
Recall from \cite{AT} that a space $X$ is a $WED$-space if every two disjoint open sets in it can be separated by two disjoint $Z^{\circ}$-sets (i.e., the interior of a zero-set). Now, we extend this class of topological space to a large class. 
\begin{defn}
A topological space $X$ is said to be $WCF$-space if for every two disjoint cozero-set $A,B\in\coz(X)$, there exist $Z_1,Z_2\in Z(X)$ containing $A$ and $B$, respectively, such that $Z_1^\cc\cap Z_2^\cc=\tohi$.
\end{defn}
The above definition can also be presented in another way.

\begin{defn}
Let $\mathcal{B}, \mathcal{D} \subseteq \mathcal{P}(X)$ (the power set of $X$).  
Two distinct subsets $F,H \subseteq X$ are said to be \emph{$\mathcal{B}$-separated} if there exist two disjoint sets $A,B \in \mathcal{B}$ such that $F \subseteq A$ and $H \subseteq B$.  
We say that $\mathcal{D}$ is \emph{$\mathcal{B}$-separated} if, for every two disjoint sets $D_1,D_2 \in \mathcal{D}$, there exist disjoint $B_1,B_2 \in \mathcal{B}$ such that $D_1 \subseteq B_1$ and $D_2 \subseteq B_2$.  
A space $X$ is called \emph{$\mathcal{D}$-$\mathcal{B}$-separated} if $\mathcal{D}$ is $\mathcal{B}$-separated.  
Moreover, we say that $X$ is \emph{basically $\mathcal{B}$-separated} if there exists a base $\mathcal{D}$ for the topology of $X$ such that $\mathcal{D}$ is $\mathcal{B}$-separated.
\end{defn}
\begin{Rem}
By this definition, a space $X$ is a $WED$-space (resp., $WCF$-space) if and only if it is $\CO(X)$-$Z^\cc(X)$-separated (resp., $\coz(X)$-$Z^\cc(X)$-separated).  
In particular, a $WCF$-space $X$ is basically $Z^\cc(X)$-separated.
\end{Rem}
Since, by \cite[Lemma 2.11]{AlTaTa}, for every $f,g\in C(X)$ we have
\[
\overline{\coz(f)\cap \coz(g)}^{\,\cc}
= \overline{\coz(f)}^{\,\cc}\cap \overline{\coz(g)}^{\,\cc},
\]
the following proposition follows immediately.

\begin{prop}
A topological space $X$ is a $WCF$-space if and only if every pair of supports with disjoint interiors are $Z^\cc(X)$-separated.
\end{prop}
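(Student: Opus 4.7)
My plan is to use the displayed identity to translate the condition on supports into a statement about disjoint cozero-sets, where the $WCF$ definition applies directly (cf.\ the preceding Remark).

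The first step is the observation that, for $f,g\in C(X)$, the interiors of the supports $\overline{\coz(f)}$ and $\overline{\coz(g)}$ are disjoint if and only if $\coz(f)\cap\coz(g)=\emptyset$. Indeed, the displayed identity rewrites $\overline{\coz(f)}^{\,\cc}\cap\overline{\coz(g)}^{\,\cc}$ as $\overline{\coz(f)\cap\coz(g)}^{\,\cc}$; since $\coz(f)\cap\coz(g)=\coz(fg)$ is open, and an open set whose closure has empty interior must itself be empty, the equivalence follows. Thus \emph{disjoint interiors of supports} and \emph{disjoint cozero-sets} are the same condition on the pair $(f,g)$.

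For the forward direction, assume $X$ is $WCF$ and let $\overline{\coz(f)},\overline{\coz(g)}$ be supports with disjoint interiors. By the observation, $\coz(f)$ and $\coz(g)$ are disjoint cozero-sets; hence by the $WCF$ definition there are zero-sets $Z_1,Z_2$ with $\coz(f)\subseteq Z_1$, $\coz(g)\subseteq Z_2$, and $Z_1^{\cc}\cap Z_2^{\cc}=\tohi$. Since $Z_1$ and $Z_2$ are closed, we also have $\overline{\coz(f)}\subseteq Z_1$ and $\overline{\coz(g)}\subseteq Z_2$; the supports thus sit inside zero-sets whose $Z^{\cc}$-interiors are disjoint, which is the required $Z^{\cc}$-separation.

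The converse is immediate: given disjoint cozero-sets $A=\coz(f)$ and $B=\coz(g)$, the supports $\overline{A},\overline{B}$ have disjoint interiors by the identity and so, by hypothesis, are $Z^{\cc}$-separated, say $\overline{A}\subseteq Z_1$ and $\overline{B}\subseteq Z_2$ with $Z_1^{\cc}\cap Z_2^{\cc}=\tohi$. The inclusions $A\subseteq\overline{A}\subseteq Z_1$ and $B\subseteq\overline{B}\subseteq Z_2$ then furnish the data witnessing that $X$ is $WCF$. The only real subtlety here is to keep the reading of ``$Z^{\cc}$-separation of supports'' compatible with the chain $\coz(f)\subseteq\overline{\coz(f)}$; once that is in place the proposition is indeed an immediate consequence of the identity, as the paper asserts, and I do not expect any nontrivial obstacle.
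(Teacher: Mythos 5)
Your proof is correct and is precisely the argument the paper intends (the paper offers no proof beyond declaring the proposition an immediate consequence of the displayed identity): the identity turns ``supports with disjoint interiors'' into ``disjoint cozero-sets'' via your observation that an open set whose closure has empty interior must itself be empty, and then the definition of a $WCF$-space applies verbatim in both directions. The one caveat is exactly the subtlety you flag at the end: the statement is only true if ``$Z^{\cc}(X)$-separated'' is read in the sense of the $WCF$-definition (each support contained in a zero-set, with the two \emph{interiors} disjoint), and not literally in the sense of the paper's general separation definition (each support contained in one of two \emph{disjoint} $Z^{\cc}$-sets); under the latter reading the forward direction fails, since for instance in $\BBR$ the disjoint cozero-sets $\bigcup_{n}\bigl(\tfrac{1}{2n+1},\tfrac{1}{2n}\bigr)$ and $\bigcup_{n}\bigl(\tfrac{1}{2n+2},\tfrac{1}{2n+1}\bigr)$ have closures with disjoint interiors that nevertheless both contain $0$, so they cannot be placed in disjoint sets of any kind. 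With that reading fixed, your argument is complete and coincides with the paper's.
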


\begin{exam}
(1) Every $WED$-space is a $WCF$-space. In particular, every perfectly normal space (and hence every metric space) is a $WCF$-space.

(2) Every $F$-space is a $WCF$-space,  by Lemma \ref{p1}.
\end{exam}
\begin{prop}
The following statements hold.
\begin{enumerate}
\item Every cozero-complemented space is a $WCF$-space.

\item A $WCF$-space $X$  that is also a quasi $F$-space is an $F^{\prime}$-space.
\end{enumerate}
\end{prop}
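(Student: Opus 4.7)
My plan for (1) is to apply cozero-complementation not to $A$ and $B$ individually but to their union. Given disjoint cozero-sets $A,B$ in $X$, the set $A\cup B$ is itself a cozero-set (if $A=\coz(f)$ and $B=\coz(g)$, then $A\cup B=\coz(f^{2}+g^{2})$), so the hypothesis furnishes a cozero-set $C$ with $(A\cup B)\cap C=\emptyset$ and $(A\cup B)\cup C$ dense in $X$. I then set
\[
Z_{1}:=X\setminus(B\cup C), \qquad Z_{2}:=X\setminus(A\cup C),
\]
both of which are zero-sets since $B\cup C$ and $A\cup C$ are cozero-sets. Evidently $A\subseteq Z_{1}$ (as $A$ is disjoint from both $B$ and $C$), and similarly $B\subseteq Z_{2}$. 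Each $Z_{i}$ is closed, so $Z_{i}^{\circ}=X\setminus\overline{X\setminus Z_{i}}$, and a single computation using the density of $A\cup B\cup C$ gives
\[
Z_{1}^{\circ}\cap Z_{2}^{\circ}
   = X\setminus\bigl(\overline{B\cup C}\cup\overline{A\cup C}\bigr)
   = X\setminus\overline{A\cup B\cup C}
   = \emptyset,
\]
as required.

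For (2), I would use the standard characterization that an $F'$-space is precisely a space in which any two disjoint cozero-sets have disjoint closures. Let $A,B$ be disjoint cozero-sets in $X$. Applying the $WCF$ property yields zero-sets $Z_{1}\supseteq A$ and $Z_{2}\supseteq B$ with $Z_{1}^{\circ}\cap Z_{2}^{\circ}=\emptyset$. Since $A$ is open and $A\subseteq Z_{1}$, we have $A\subseteq Z_{1}^{\circ}$, and likewise $B\subseteq Z_{2}^{\circ}$; thus $Z_{1}^{\circ}$ and $Z_{2}^{\circ}$ are disjoint $Z^{\circ}$-sets. Because $X$ is a quasi $F$-space, Lemma~\ref{p2} delivers $\overline{Z_{1}^{\circ}}\cap\overline{Z_{2}^{\circ}}=\emptyset$, whence $\overline{A}\cap\overline{B}=\emptyset$.

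The only real conceptual step, which is the main potential obstacle, is the decision in (1) to apply cozero-complementation to $A\cup B$ rather than to $A$ and $B$ separately. The single cozero-set $C$ then plays a double role inside both $Z_{1}$ and $Z_{2}$, and this is precisely what makes one density calculation close the argument; applying the property twice (once to $A$ and once to $B$) produces candidate zero-sets whose interiors cannot be shown to be disjoint. Beyond this observation, both parts reduce to routine unpacking of definitions together with Lemma~\ref{p2} for part (2).
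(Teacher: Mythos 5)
Your proof is correct, and part (1) takes a genuinely different route from the paper's. The paper applies cozero-complementation \emph{twice} — once to $f$ and once to $g$, obtaining $f_1$ and $g_1$ — and then forms the combined functions $f_2=f_1^2+g^2$ and $g_2=g_1^2+f^2$, so that $\rint Z(f_2)\cap\rint Z(g_2)\subseteq \rint Z(g)\cap\rint Z(g_1)=\emptyset$. You instead apply the hypothesis \emph{once}, to the cozero-set $A\cup B$, obtaining a single complementing cozero-set $C$, and take $Z_1=X\setminus(B\cup C)$, $Z_2=X\setminus(A\cup C)$; disjointness of the interiors then falls out of one density computation via $\overline{B\cup C}\cup\overline{A\cup C}=\overline{A\cup B\cup C}=X$. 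Both arguments are sound; yours is arguably more economical (one application of the hypothesis, one computation), while the paper's stays entirely in the language of zero-sets of explicit functions. One remark: your closing claim that applying the property separately to $A$ and to $B$ "produces candidate zero-sets whose interiors cannot be shown to be disjoint" is too strong — the paper's proof shows that the double application does succeed, provided one intersects each complementing zero-set with the zero-set of the \emph{other} function, which is exactly what the substitutions $f_2=f_1^2+g^2$ and $g_2=g_1^2+f^2$ accomplish. Your part (2) is essentially identical to the paper's: both pass from $A\subseteq Z_1$, $B\subseteq Z_2$ to the disjoint $Z^{\circ}$-sets and invoke Lemma~2.2 (quasi $F$-spaces) to get disjoint closures.
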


\begin{proof}
(1) Let $X$ be a cozero-complemented space and let $Coz(f)$ and $Coz(g)$ be two disjoint cozero-sets in $X$.  
By hypothesis, for $f$ there exists $f_1$ and for $g$ there exists $g_1$ such that 
\[
Coz(f)\cap Coz(f_1)=\emptyset,\quad \rint Z(f)\cap \rint Z(f_1)=\emptyset,
\]
\[
Coz(g)\cap Coz(g_1)=\emptyset,\quad \rint Z(g)\cap \rint Z(g_1)=\emptyset.
\]
Now, put $f_2=f_1^2+g^2$ and $g_2=g_1^2+f^2$. Then $f_2,g_2\in C(X)$, and we have  
$Coz(f)\subseteq Z(f_1)$ and $Coz(f)\subseteq Z(g)$, hence
\[
Coz(f)\subseteq \rint Z(f_1)\cap \rint Z(g)\subseteq \rint Z(f_2)
\]
Similarly, $Coz(g)\subseteq \rint Z(g_2)$.  On the other hand,
\[
\rint Z(f_2)\cap \rint Z(g_2)
\subseteq \rint Z(g)\cap \rint Z(g_1)
=\emptyset.
\]
Thus, $X$ is a $WCF$-space.

(2) Consider two disjoint cozero-sets $Coz(f)$ and $Coz(g)$ in $X$. Then, there exist two disjoint $Z^{\circ}$-sets $\rint Z(f_1)$ and $\rint Z(g_1)$ such that $Coz(f)\subseteq \rint Z(f_1)$ and $Coz(g)\subseteq\rint Z(g_1)$. By  Lemma  \ref{p2}, $\ov{\rint Z(f_1)}\cap \ov{\rint Z(g_1)}=\emptyset$. This implies $\ov{ Coz(f)}\cap\ov{ Coz(g)}=\emptyset$, which means that $X$ is an $F^{\prime}$-space.
\end{proof}

The following example shows that the class of $WCF$-spaces properly contains the classes of $F$-spaces and cozero-complemented spaces.

\begin{exam}
Assume that $\{X_\la\}_{\la\in\La}$ is a pairwise disjoint family of topological spaces and $X$ is the free union of these spaces. It is easy to see that $X$ is a $WCF$-space (cozero-complemented space, $F$-space) if and only if $X_\la$ is a $WCF$-space (cozero-complemented space, $F$-space) for every $\la\in\La$. Now, suppose that $X$ is an $F$-space which is not a cozero-complemented space, and $Y$ is a cozero-complemented space which is not an $F$-space. Let $T$ be the free union of $X$ and $Y$. Clearly, $T$ is a $WCF$-space which is neither a cozero-complemented space nor an $F$-space.
\end{exam}

In the next example, we present a $WCF$-space which is not a $WED$-space.

\begin{exam}(\cite[14.N]{GJ})\label{fati}
Let $X$ be an uncountable space in which all points are isolated except for a distinguished point $s$. A neighborhood of $s$ is defined to be any set containing $s$ whose complement is countable. Then, $X$ is a $P$-space. Hence, $X$ is a $WCF$-space.
    Consider two disjoint uncountable open sets $A, B \subseteq X \setminus \{s\}$. Then
    $s \in \overline{A} \cap \overline{B}$.
    Suppose, for contradiction, that $X$ were a $WED$-space. Then there would exist zero-sets $Z_1, Z_2 \in Z[X]$ such that
    \[
    A \subseteq Z_1, \quad B \subseteq Z_2, \quad \text{and} \quad \operatorname{int} Z_1 \cap \operatorname{int} Z_2 = \emptyset.
    \]
    However, since $s \in \overline{A} \cap \overline{B}$, we must have $s \in Z_1 \cap Z_2$. But $\{s\}$ is not a zero-set.
    Therefore, $\operatorname{int} Z_1 \cap \operatorname{int} Z_2 \neq \emptyset$, a contradiction. Thus, $X$ is a $WCF$-space which is not a $WED$-space.
\end{exam}
Next we give an example of a non-$WCF$-space.

\begin{exam}
Let $D$ be an uncountable discrete space and let $X=D\cup\{\si\}$ be the one-point compactification of $D$.  
It is clear that a subset containing $\si$ is a zero-set if and only if its complement is countable.  
Suppose $F,H$ are two disjoint infinite countable cozero-sets in $X$, with $F\subseteq Z_1^\cc$ and $H\subseteq Z_2^\cc$.  
Obviously $\si\in \ov{F}\cap \ov{H}\subseteq Z_1\cap Z_2$.  
Hence $Z_1\cap Z_2$ is uncountable, and therefore $Z_1^\cc\cap Z_2^\cc\neq\emptyset$.  
This shows that $X$ is not a $WCF$-space.
\end{exam}

The next example shows that among spaces with only one non-isolated point, where neighborhoods of this point are determined by the cardinality of their complements, the one-point compactification is the only one that fails to be a $WCF$-space.

\begin{exam}
Let $\al$ and $\be$ be infinite cardinals with $\al<\be$.  
Assume $X=D\cup\{\si\}$ with $|X|=\be$, where each point of $D$ is isolated, and 
\[
\CO_\si=\{A\subseteq X:~\si\in A,~|X\setminus A|\leq\al\}, \text{i.e., the set of open neighborhoods of}~\sigma.\]
Then $X$ is a $P$-space, and hence $X$ is a $WCF$-space.
\end{exam}
Now we present an example of a space that is neither compact nor a $WCF$-space. 
To present that, we need the following proposition.

\begin{prop}\label{jan}
Let $X$ be a topological space with only one non-isolated point $\si$, where $\si$ is not a $G_\de$-point.  
Then $X$ is a $WCF$-space if and only if, for any two disjoint cozero-sets, one of them is a clopen subset.
\end{prop}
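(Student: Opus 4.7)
The reverse direction is immediate: if $A,B$ are disjoint cozero-sets with, say, $A$ clopen, then $A$ is a zero-set satisfying $\rint(A)=A$, while $X\setminus A$ is a zero-set satisfying $\rint(X\setminus A)=X\setminus A\supseteq B$; since $A\cap(X\setminus A)=\emptyset$, the pair $(A,X\setminus A)$ witnesses that $X$ is a $WCF$-space.

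For the forward direction, assume $X$ is a $WCF$-space and let $A,B$ be disjoint cozero-sets; the goal is to show one of them is clopen. First I would dispatch the case $\sigma\in A\cup B$: if, say, $\sigma\in A$, then $X\setminus A\subseteq D:=X\setminus\{\sigma\}$, so every point $x$ of $X\setminus A$ is isolated and $\{x\}$ is a neighborhood of $x$ disjoint from $A$; hence $\overline{A}=A$ and $A$ is clopen. So one may assume $A,B\subseteq D$ and argue by contradiction that neither is clopen. The same singleton argument gives $\overline{A}\setminus A\subseteq\{\sigma\}$, and since $A$ is open but not closed we obtain $\sigma\in\overline{A}$; likewise $\sigma\in\overline{B}$.

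Next I would apply the $WCF$ hypothesis to $A,B$ to obtain zero-sets $Z_1\supseteq A$ and $Z_2\supseteq B$ with $\rint(Z_1)\cap\rint(Z_2)=\emptyset$. Since each $Z_i$ is closed, $\overline{A}\subseteq Z_1$ and $\overline{B}\subseteq Z_2$, so $\sigma\in Z_1\cap Z_2$. The crucial observation is now the following: for any $x\in Z_1\cap Z_2$ with $x\neq\sigma$, the point $x$ is isolated, so $\{x\}$ is open and contained in both $Z_1$ and $Z_2$, placing $x\in\rint(Z_1)\cap\rint(Z_2)$ and contradicting disjointness. Hence $Z_1\cap Z_2\subseteq\{\sigma\}$, and combined with $\sigma\in Z_1\cap Z_2$ this yields $Z_1\cap Z_2=\{\sigma\}$. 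But a finite intersection of zero-sets is a zero-set, so $\{\sigma\}$ is a zero-set, making $\sigma$ a $G_\delta$-point and contradicting the hypothesis.

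I do not anticipate a serious obstacle; the only subtle point is noticing that the disjointness of $\rint(Z_1)$ and $\rint(Z_2)$, together with the isolation of every point of $D$, collapses $Z_1\cap Z_2$ to $\{\sigma\}$, after which the non-$G_\delta$ assumption on $\sigma$ closes the argument in one line.
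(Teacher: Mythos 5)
Your proof is correct and follows essentially the same route as the paper: reduce to the case $\sigma\in\overline{A}\cap\overline{B}$, deduce $\sigma\in Z_1\cap Z_2$, and use that $\sigma$ is not a $G_\delta$-point to force the zero-set $Z_1\cap Z_2$ to contain an isolated point lying in $\rint Z_1\cap\rint Z_2$. You merely make explicit the case analysis (and the observation that $\overline{A}\setminus A\subseteq\{\sigma\}$) that the paper compresses into ``it suffices to show $\sigma\notin\overline{A}\cap\overline{B}$.''
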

\begin{proof}
($\Rightarrow$)  
Suppose $A,B\in\coz(X)$ are disjoint. It suffices to show that $\si\notin \ov{A}\cap \ov{B}$.  
Assume, to the contrary, that $\si\in \ov{A}\cap \ov{B}$.  
Let $Z_1,Z_2\in Z(X)$ be such that $A\subseteq Z_1^\cc$ and $B\subseteq Z_2^\cc$.  
Thus $\si\in Z_1\cap Z_2 \in Z(X)$.  Since $\si$ is not a $G_\de$-point, the set $Z_1\cap Z_2$ must contain an isolated point.  
Hence $(Z_1\cap Z_2)^\cc\neq\emptyset$, which gives a contradiction.  

($\Leftarrow$)  
This direction is immediate.
\end{proof}

\begin{exam}
Let $Y$ be a topological space in which every countable intersection of open dense subsets is nonempty. Furthermore, suppose that $Y$ contains two disjoint dense countable subsets $A,B$ (for example, $\BBR$ with the standard topology).  
Define $X=Y\cup\{\si\}$ such that every point of $Y$ is assumed to be an isolated point of $X$, and 
\[
\CO_\si=\{\,U\cup\{\si\}:~U\in\CO(Y),\ \ov{U}=Y\,\},~ 
\text{i.e., the set of open neighborhoods of}~\sigma.\]
It is easy to see that $X$ with this topology is a completely regular Hausdorff space, $\si$ is not a $G_\de$-point, and $A,B\in\coz(X)$ with $A\cap B=\emptyset$.  
Moreover, $\si\in\ov{A}\cap\ov{B}$.  
Thus $A$ and $B$ are not clopen subsets, and by Proposition~\ref{jan}, $X$ is not a $WCF$-space.
\end{exam}

\begin{thm}\label{sara}
The following statements hold.
\begin{enumerate}
\item Let $X$ be a dense $z$-embedded subset of a space $Y$. Then $X$ is a $WCF$-space \ifif $Y$ is a $WCF$-space.

\item Let $X$ be a dense $C^*$-embedded subspace of a space $Y$. Then $X$ is a $WCF$-space \ifif $Y$ is a $WCF$-space.

\item Every cozero-set in a $WCF$-space is a $WCF$-space.
\end{enumerate}
\end{thm}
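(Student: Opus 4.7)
The plan is to prove (1) in detail, deduce (2) as an immediate corollary, and handle (3) by a short direct argument. The technical engine I will use throughout is the following identity: when $X$ is dense in $Y$ and $Z\in Z(Y)$, one has
\[
\rint_X(Z\cap X)=(\rint_Y Z)\cap X.
\]
The inclusion $\supseteq$ is immediate from the definitions. For $\subseteq$, if $x\in\rint_X(Z\cap X)$, choose $V$ open in $Y$ with $x\in V\cap X\subseteq Z\cap X\subseteq Z$; density of $X$ makes $V\cap X$ dense in the open set $V$, and since $Z$ is closed, $V\subseteq \ov{V\cap X}\subseteq Z$, whence $x\in V\subseteq\rint_Y Z$. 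A companion observation I will also invoke is that, under the density hypothesis, any open subset of $Y$ disjoint from $X$ is empty.

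For (1), the $z$-embedding assumption sets up a two-way passage between cozero-sets: every $A\in\coz(X)$ has the form $A=B\cap X$ for some $B\in\coz(Y)$, and every $B\in\coz(Y)$ restricts to a member of $\coz(X)$. To prove $Y$ is $WCF$ assuming $X$ is, take disjoint $B_1,B_2\in\coz(Y)$, set $A_i=B_i\cap X$, apply the hypothesis to obtain $W_i\in Z(X)$ with $A_i\subseteq W_i$ and $\rint_X W_1\cap\rint_X W_2=\emptyset$, lift $W_i$ to $Z_i\in Z(Y)$ with $W_i=Z_i\cap X$, verify $B_i\subseteq Z_i$ using that $B_i\cap X\subseteq Z_i$, $Z_i$ is closed, and $B_i\subseteq\ov{B_i\cap X}$ by density, and finally combine the key identity with the companion observation to deduce $\rint_Y Z_1\cap\rint_Y Z_2=\emptyset$. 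The reverse direction is dual: lift disjoint cozero-sets of $X$ to cozero-sets of $Y$ (the companion observation upgrades disjointness in $X$ to disjointness in $Y$), apply $WCF$ of $Y$, and restrict the separating zero-sets back to $X$ by the key identity.

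Part (2) follows from (1) because a dense $C^*$-embedded subspace is automatically $z$-embedded: any $Z(f)$ with $f\in C(X)$ equals $Z(f/(1+|f|))$ with $f/(1+|f|)\in C^*(X)$, and the extension over $Y$ yields a zero-set of $Y$ tracing back to $Z(f)$. For (3), let $U=\coz(h)$ be a cozero-set in a $WCF$-space $X$. Since $U$ is open in $X$ but not in general dense, I argue directly: the crucial point is that every cozero-set of $U$ is in fact a cozero-set of $X$. Given $A=\coz(f)$ with $f\in C(U)$, the formula $g(x)=h(x)^2 f(x)^2/(1+f(x)^2)$ for $x\in U$ and $g(x)=0$ for $x\in Z(h)$ defines a continuous function on $X$ with $\coz(g)=A$; the factor $h^2$ is what forces $g(x)\to 0$ as $x$ approaches $Z(h)$ regardless of how $f$ behaves on $U$. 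Viewing disjoint $A_1,A_2\in\coz(U)$ as disjoint cozero-sets of $X$, the hypothesis produces separating $Z_1,Z_2\in Z(X)$, and $Z_i\cap U\in Z(U)$ contains $A_i$; since $U$ is open in $X$, the $U$-interior of any subset of $U$ coincides with its $X$-interior, so the required disjointness of $\rint_U(Z_i\cap U)$ is immediate.

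The step I expect to require the most care is the verification of the interior identity in part (1) and its consistent use in both directions, since the proof leans essentially on combining density with the closedness of zero-sets; the cozero-set continuity check underpinning (3) is routine once the multiplier $h^2$ is in place.
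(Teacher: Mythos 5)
Your proof is correct and follows essentially the same route as the paper: both directions of (1) rest on the identity $\rint_X(Z\cap X)=\rint_Y Z\cap X$ for $X$ dense and $Z\in Z(Y)$ (which the paper establishes inclusion-by-inclusion inline rather than as a standalone equality), (2) is reduced to (1) via $C^*$-embedded $\Rightarrow$ $z$-embedded, and (3) uses transitivity of cozero-sets together with openness of the cozero-set. The only cosmetic difference is that you prove cozero transitivity by an explicit multiplier construction ($g=h^2f^2/(1+f^2)$) where the paper simply cites it and invokes Blair--Hager for $z$-embeddability of cozero-sets.
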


\begin{proof}
(1 $\Rightarrow$).  
Assume $X$ is a dense $WCF$-subspace of $Y$. Suppose that $A$ and $B$ are two disjoint cozero-sets in $Y$.  
Then $A\cap X$ and $B\cap X$ are disjoint cozero-sets in $X$. By hypothesis, there exist zero-sets $Z(h_1),Z(h_2)\in Z(X)$ such that
\[
A\cap X \subseteq Z(h_1), \quad B\cap X \subseteq Z(h_2),
\quad \text{and} \quad \rint_X Z(h_1)\cap \rint_X Z(h_2)=\emptyset.
\]

Since $X$ is $z$-embedded in $Y$, there exist zero-sets $Z(f_1),Z(f_2)\in Z(Y)$ such that  
$Z(h_1)=Z(f_1)\cap X$ and $Z(h_2)=Z(f_2)\cap X$.  
Thus
\[
A\cap X \subseteq Z(f_1)\quad\text{and} \quad B\cap X \subseteq Z(f_2).
\]
Since $X$ is dense in $Y$, we obtain
\[
A\subseteq \rcl_Y(A\cap X) \subseteq Z(f_1), 
\quad\text{and}\quad
B\subseteq \rcl_Y(B\cap X) \subseteq Z(f_2).
\]
If $\rint_Y Z(f_1)\cap \rint_Y Z(f_2)\neq\emptyset$, then
\[
\rint_Y Z(f_1)\cap \rint_Y Z(f_2)\cap X \neq \emptyset.
\]
But
\[
\rint_Y Z(f_1)\cap X \subseteq \rint_X(Z(f_1)\cap X)=\rint_X Z(h_1),
\]
and similarly for $Z(f_2)$.  
Hence $\rint_X Z(h_1)\cap \rint_X Z(h_2)\neq\emptyset$, a contradiction.

(1 $\Leftarrow$).  
Assume $X$ is a dense $z$-embedded subspace of a $WCF$-space $Y$.  
Let $A,B$ be two disjoint cozero-sets in $X$.  
Since $X$ is $z$-embedded in $Y$, there exist cozero-sets $A',B'$ in $Y$ such that  
$A' \cap X=A$ and $B' \cap X=B$.  
Since $X$ is dense in $Y$, we have $A'\cap B'=\emptyset$. By hypothesis, there exist $Z(f_1),Z(f_2)\in Z(Y)$ such that
\[
A' \subseteq Z(f_1), \quad B' \subseteq Z(f_2), 
\quad \text{and} \quad \rint_Y Z(f_1)\cap \rint_Y Z(f_2)=\emptyset.
\]
Thus
\[
A\subseteq Z(f_1)\cap X,\quad\text{and} \qquad B\subseteq Z(f_2)\cap X.
\]
Suppose, toward a contradiction, that
\[
\rint_X(Z(f_1)\cap X)\cap \rint_X(Z(f_2)\cap X)\neq\emptyset.
\]
Then, there exists $x\in X$ and open sets $G\subseteq Y$ such that  $x\in G\cap X \subseteq Z(f_1)\cap Z(f_2)$. Since $X$ is dense in $Y$, we have $x\in G \subseteq \rcl_Y(G\cap X)\subseteq Z(f_1)\cap Z(f_2)$. Hence, $x\in \rint_Y Z(f_1)\cap \rint_Y Z(f_2)$, contradicting the choice of $f_1,f_2$.  

(2). Every $C^*$-embedded (and hence $C$-embedded) subspace is $z$-embedded.  
Thus the result follows directly from (1).  

(3).  
Let $X$ be a cozero-set in a $WCF$-space $Y$.  
By \cite[Proposition~1.1]{BH}, $X$ is $z$-embedded in $Y$.  
Since the property of being a cozero-set is transitive, two disjoint cozero-sets $A,B$ in $X$ are also disjoint cozero-sets in $Y$.  Hence there exist $Z(f_1),Z(f_2)\in Z(Y)$ such that
\[
A\subseteq Z(f_1), \quad B\subseteq Z(f_2), 
\quad \text{and} \quad \rint_Y Z(f_1)\cap \rint_Y Z(f_2)=\emptyset.
\]
Thus
\[
A\subseteq Z(f_1)\cap X,\quad\text{and} \qquad B\subseteq Z(f_2)\cap X.
\]
Since $X$ is open in $Y$, we have
\[
\rint_X(Z(f_1)\cap X)\cap \rint_X(Z(f_2)\cap X) 
= \big(\rint_Y Z(f_1)\cap X\big)\cap \big(\rint_Y Z(f_2)\cap X\big)=\emptyset.
\]
Therefore $X$ is a $WCF$-space.  
\end{proof}

\begin{cor}\label{1}
The following statements hold.
\begin{enumerate}
\item A space $X$ is a $WCF$-space \ifif $\beta X$ is a $WCF$-space.

\item Let $X\subseteq Y\subseteq\beta X$. Then $X$ is a $WCF$-space \ifif $Y$ is a $WCF$-space.

\item A  space $X$ is a $WCF$-space \ifif $\upsilon X$ is a $WCF$-space.
\end{enumerate}
\end{cor}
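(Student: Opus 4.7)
The plan is to derive all three parts as essentially immediate consequences of Theorem~\ref{sara}(2), which states that the $WCF$ property transfers in both directions between a dense $C^*$-embedded subspace and the ambient space.

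For part~(1), $X$ is by definition a dense $C^*$-embedded subspace of $\beta X$, so Theorem~\ref{sara}(2) applied with $Y=\beta X$ gives the equivalence directly. For part~(3), the analogous observation is that $X$ is a dense $C$-embedded, and in particular $C^*$-embedded, subspace of $\upsilon X$; again Theorem~\ref{sara}(2) yields the equivalence. Alternatively, once part~(2) is established, part~(3) follows from the standard inclusion $X\sub\upsilon X\sub\beta X$.

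The only point requiring a brief argument is part~(2). Given $X\sub Y\sub\beta X$, I would first note that $X$ is dense in $Y$ since it is already dense in $\beta X$. Then I would verify that $X$ is $C^*$-embedded in $Y$: any $f\in C^*(X)$ admits a (bounded) continuous extension $\tilde f\in C^*(\beta X)$ by the defining property of $\beta X$, and the restriction $\tilde f|_Y$ is then a bounded continuous extension of $f$ to $Y$. A single application of Theorem~\ref{sara}(2) now gives that $X$ is $WCF$ if and only if $Y$ is $WCF$. A cleaner alternative, once part~(1) is in hand, is to observe that since $Y$ is dense in $\beta X$ we have $\beta Y=\beta X$, and then chain: $X$ is $WCF$ iff $\beta X$ is $WCF$ iff $\beta Y$ is $WCF$ iff $Y$ is $WCF$.

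There is no real obstacle here; the substantive work was absorbed into Theorem~\ref{sara}. The only thing to be careful about is not invoking Theorem~\ref{sara}(1) — which requires $z$-embedding — when $C^*$-embedding is already available and directly gives what is needed via Theorem~\ref{sara}(2).
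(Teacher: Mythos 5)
Your proposal is correct and follows essentially the same route as the paper: all three parts are reduced to Theorem~\ref{sara}(2), and your ``cleaner alternative'' for part~(2) (using $\beta Y=\beta X$ and chaining through part~(1)) is exactly the paper's argument. Your primary argument for part~(2) --- directly verifying that $X$ is dense and $C^*$-embedded in $Y$ by extending to $\beta X$ and restricting --- is an equally valid minor variant.
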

\begin{proof}
(1)  This follows from Part 2 of Theorem \ref{sara}.

(2) By \cite[Theorem 6.7]{GJ}, $\beta Y=\beta X$. Thus, by Part(1), $X$ is a $WCF$-space \ifif $\beta Y$ is so, and so again by Part (1), $X$ is a $WCF$-space \ifif $Y$ is so.

(3) This follows from Part (2).
\end{proof}

\begin{prop}\label{}
The following statements hold.
\begin{enumerate}
\item If a space $X$ is a $WED$-space and $Y\in\CO(X)$ (i.e., the open subsets of $X$), then $Y$ is also a $WED$-space.

\item  If a  space $X$ is a $WED$-space and $Y\in\coz(X)$, then $Y$ is also a $WED$-space.
\end{enumerate}
\end{prop}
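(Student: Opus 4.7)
The plan is to establish part (1) directly and then deduce part (2) immediately, since every cozero-set of $X$ is open in $X$, so $\coz(X) \subseteq \CO(X)$. The key observations are that zero-sets of $X$ restrict to zero-sets of any subspace via restriction of the defining function, and that when $Y$ is open in $X$ the interior operators of $Y$ and $X$ agree on subsets of $Y$. This is essentially the same mechanism used in Theorem~\ref{sara}(3), but simpler: we do not need $z$-embeddedness, only openness.

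For part (1), I would start with two disjoint open sets $U,V$ of $Y$. Since $Y\in\CO(X)$, both $U$ and $V$ are also disjoint open sets of $X$. Applying the $WED$-property of $X$ yields $f_1,f_2\in C(X)$ such that $U\subseteq Z(f_1)$, $V\subseteq Z(f_2)$ and $\rint_X Z(f_1)\cap \rint_X Z(f_2)=\tohi$. Setting $W_i := Z(f_i)\cap Y = Z(f_i|_Y)$ produces zero-sets of $Y$ containing $U$ and $V$ respectively.

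To finish, I would verify that $\rint_Y W_1\cap \rint_Y W_2=\tohi$. Because $Y$ is open in $X$, the open subsets of $Y$ are precisely the open subsets of $X$ contained in $Y$; hence $\rint_Y A=\rint_X A$ for every $A\sub Y$. Combined with the identity $\rint_X(Z\cap Y)=\rint_X Z\cap Y$, which holds whenever $Y$ is open, this gives $\rint_Y W_i=\rint_X Z(f_i)\cap Y$, and therefore
\[
\rint_Y W_1\cap \rint_Y W_2
=\bigl(\rint_X Z(f_1)\cap \rint_X Z(f_2)\bigr)\cap Y
=\tohi.
\]

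There is no real obstacle: the argument is a routine transfer once one notices that the openness of $Y$ makes the $X$- and $Y$-interior operators compatible on subsets of $Y$. Part (2) is then automatic, since a cozero-set is a particular kind of open set and part (1) applies verbatim.
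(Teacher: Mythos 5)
Your proposal is correct and follows essentially the same route as the paper: restrict the zero-sets $Z(f_i)$ to $Y$, then use the openness of $Y$ to identify $\rint_Y(Z(f_i)\cap Y)$ with $\rint_X Z(f_i)\cap Y$, and deduce part (2) from part (1) because cozero-sets are open. No gaps.
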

\begin{proof}
(1). Suppose that $U,V\in\CO(Y)$ and $U\cap V=\tohi$. By hypothesis, $U,V\in\CO(X)$ and so there exist $Z_1,Z_2\in Z(X)$ such that $U\sub Z_1^\cc$, $V\sub Z_2^\cc$, and $Z_1^\cc\cap Z_2^\cc=\tohi$. Clearly, $A=Z_1\cap Y\in Z(Y)$, $B=Z_2\cap Y\in Z(Y)$, and also we have:
\[U\sub Z_1^\cc\cap Y=(Z_1\cap Y)^\cc=A^\cc=\rint_Y(A)~~,\]
\[V\sub Z_2^\cc\cap Y=(Z_2\cap Y)^\cc=B^\cc=\rint_Y(B),\text{and}\quad\rint_Y(A)\cap\rint_Y(B)=\tohi.\]

(2). This follows from Part (1).
\end{proof}

\begin{prop}\label{}
Suppose that for each $\la\in\La$, $X_\la$ is a topological space and $X=\prod_{\la\in\La}X_\la$. Then the following statements hold.
\begin{enumerate}
\item  Supposing that 
\[\CD=\{\bcap_{\la\in F}\pi_\la^{-1}(A_\la):~F~\text{is a finite subset of}~\La~\text{and}~A_\la\in\CO(X_\la)\}.\]
If $X_\la$ is a $WED$-space for every $\la\in\La$, then $X$ is a $\CD$-$Z^\cc(X)$-space.

\item Supposing that 
\[\CD=\{\bcap_{\la\in F}\pi_\la^{-1}(A_\la):~F~\text{is a finite subset of}~\La~\text{and}~A_\la\in\coz(X_\la)\}.\]
If $X_\la$ is a $WCF$-space for every $\la\in\La$, then $X$ is a $\CD$-$Z^\cc(X)$-space.
\end{enumerate}
\end{prop}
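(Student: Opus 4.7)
The plan is to prove both parts by the same argument, so I focus on part (2) and note that part (1) follows by replacing ``cozero-set'' with ``open set'' throughout and invoking the $WED$ hypothesis in place of the $WCF$ one. Let $D_1=\bigcap_{\la\in F_1}\pi_\la^{-1}(A_\la)$ and $D_2=\bigcap_{\la\in F_2}\pi_\la^{-1}(B_\la)$ be two disjoint elements of $\CD$. If either is empty, the separation is trivial, so I assume both are nonempty; this forces every factor $A_\la$ (for $\la\in F_1$) and $B_\la$ (for $\la\in F_2$) to be nonempty.

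The first step is to locate a single ``bottleneck'' coordinate where the separation can be performed. I would show that $D_1\cap D_2=\tohi$ forces the existence of some $\la_0\in F_1\cap F_2$ with $A_{\la_0}\cap B_{\la_0}=\tohi$. Indeed, if no such $\la_0$ existed, one could use the axiom of choice to pick $x_\la\in A_\la\cap B_\la$ for $\la\in F_1\cap F_2$, $x_\la\in A_\la$ for $\la\in F_1\setminus F_2$, $x_\la\in B_\la$ for $\la\in F_2\setminus F_1$, and arbitrary $x_\la\in X_\la$ for $\la\notin F_1\cup F_2$, producing a point of $D_1\cap D_2$, a contradiction.

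Next, I invoke the $WCF$ hypothesis on $X_{\la_0}$: since $A_{\la_0},B_{\la_0}$ are disjoint cozero-sets in $X_{\la_0}$, there exist $g_1,g_2\in C(X_{\la_0})$ with $A_{\la_0}\sub Z(g_1)$, $B_{\la_0}\sub Z(g_2)$, and $\rint Z(g_1)\cap\rint Z(g_2)=\tohi$; because $A_{\la_0},B_{\la_0}$ are open, we actually have $A_{\la_0}\sub\rint Z(g_1)$ and $B_{\la_0}\sub\rint Z(g_2)$. Setting $f_i=g_i\cc\pi_{\la_0}\in C(X)$, I obtain $Z(f_i)=\pi_{\la_0}^{-1}(Z(g_i))\in Z(X)$.

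The final ingredient is the identity $\rint_X\pi_{\la_0}^{-1}(Z(g_i))=\pi_{\la_0}^{-1}(\rint Z(g_i))$, which holds because $\pi_{\la_0}$ is an open continuous surjection: the inclusion $\sub$ follows by noting that any open $U\sub X$ contained in $\pi_{\la_0}^{-1}(Z(g_i))$ projects to an open $\pi_{\la_0}(U)\sub Z(g_i)$, hence $\pi_{\la_0}(U)\sub\rint Z(g_i)$; the reverse inclusion is obvious by continuity. With this identity in hand, $D_1\sub\pi_{\la_0}^{-1}(A_{\la_0})\sub\rint_X Z(f_1)$, $D_2\sub\rint_X Z(f_2)$, and $\rint_X Z(f_1)\cap\rint_X Z(f_2)=\pi_{\la_0}^{-1}\bigl(\rint Z(g_1)\cap\rint Z(g_2)\bigr)=\tohi$. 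The argument presents no serious obstacle; the only step requiring any care is the identification of the bottleneck coordinate $\la_0$, which is a routine choice argument.
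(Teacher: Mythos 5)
Your proof is correct and follows essentially the same route as the paper: locate a coordinate $\ga\in F_1\cap F_2$ with $A_\ga\cap B_\ga=\tohi$, separate there using the $WED$/$WCF$ hypothesis, and pull back along $\pi_\ga$. The only difference is that you spell out the identity $\rint_X\pi_\ga^{-1}(Z)=\pi_\ga^{-1}(\rint Z)$ via openness of the projection, which the paper leaves as ``easy to see.''
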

\begin{proof}
(1). Assume that $U=\bcap_{\la\in F_1}\pi_\la^{-1}(A_\la), V=\bcap_{\la\in F_2}\pi_\la^{-1}(B_\la)\in\CD$ such that $U\cap V=\tohi$ where $F_1$ and $F_2$ are two finite subset of $\La$, and $A_\la,B_\la\in\CO(X_\la)$. If $U=\tohi$ or $V=\tohi$, then we have nothing to do. Otherwise, there exists $\ga\in F_1\cap F_2$ such that $A_\ga\cap B_\ga=\tohi$ and so there exist $Z_1,Z_2\in Z(X_\ga)$ containing $A_\ga$ and $B_\ga$ respectively, and $Z_1^\cc\cap Z_2^\cc=\tohi$. Put $T_1=\pi^{-1}Z_1$ and $T_2=\pi^{-1}Z_2$. It is easy to see that $T_1,T_2\in Z(X)$, $U\sub T_1$, $V\sub T_2$, and $T_1^\cc\cap T_2^\cc=\tohi$.

(2). It is  similar to the Part (1).
\end{proof}

\section{Algebraic characterization of $WCF$-spaces}

Recall from \cite{AT} that a ring $R$ is $WSA$ if for each two ideals $I, J$ of $R$ with $I\cap J=0$, we have $(\Ann(I)+\Ann(J))_{\circ}=R$. The class of $WSA$-rings containing the class of $SA$-rings. In \cite{DT}, the authors defined an $f$-ring $R$ to be  $\emph{wedded}$ if for every pair of annihilator ideals  $I, J$ of $R$ with $I\cap J=0$, $\Ann(I)+\Ann(J)$ contains a non-zero-divisor element. They further defined an $f$-ring $R$ to be  $\emph{strongly wedded}$ if for every pair of  ideals  $I, J$ of $R$ with $I\cap J=0$, the sum $\Ann(I)+\Ann(J)$ contains a non-zero-divisor element, after that, in Lemma 1.4 of the same paper, they proved   that a reduced $f$-ring is strongly wedded \ifif it is wedded. We now propose a generalization of this concept as follows:

\begin{defn}
  A ring $R$ is called a \emph{$W$-ring} (resp., \emph{$UW$-ring}) if for each pair of ideals $I, J$ of $R$ with $I\cap J=0$, the sum $\Ann(I)+\Ann(J)$ contains a regular element (resp., unit element).
\end{defn}
We recall some well-known results here to use them in the sequel.
\begin{lem}\label{nem}
The following statements hold.
\begin{enumerate}
\item (\cite[Theorem 4.12]{DT}) A reduced McCoy $f$-ring is wedded \ifif it is a $WSA$-ring.

\item (\cite[Theorem 3.8]{AT}) $C(X)$ is a $WSA$-ring \ifif $X$ is a $WED$-space.

\item (\cite[Corllary 2.13]{T}) $C(X)$ is a $UW$-ring \ifif $X$ is an extremally disconnected.

\item $C(X)$  is a $W$-ring \ifif $X$ is a $WED$-space.
\end{enumerate}
\end{lem}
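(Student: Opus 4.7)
The plan is to prove each direction directly, using the fact that in the reduced ring $C(X)$ a regular element is precisely a function with dense cozero-set.

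For the forward direction, suppose $X$ is a $WED$-space and let $I,J$ be ideals of $C(X)$ with $I\cap J=0$. Set $U=\bigcup_{f\in I}\coz(f)$ and $V=\bigcup_{g\in J}\coz(g)$. Since $I\cap J=0$ forces $fg=0$ whenever $f\in I$ and $g\in J$, the open sets $U$ and $V$ are disjoint. Applying the $WED$-condition yields $h_1,h_2\in C(X)$ with $U\subseteq Z(h_1)$, $V\subseteq Z(h_2)$, and $\rint Z(h_1)\cap \rint Z(h_2)=\tohi$. Then $h_1\in\Ann(I)$ and $h_2\in\Ann(J)$, so $h_1^2+h_2^2\in\Ann(I)+\Ann(J)$. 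The key computation is $Z(h_1^2+h_2^2)=Z(h_1)\cap Z(h_2)$; using the inclusion $\rint(A\cap B)\subseteq \rint A\cap \rint B$, we conclude $\rint Z(h_1^2+h_2^2)=\tohi$. Hence $\coz(h_1^2+h_2^2)$ is dense in $X$, which makes $h_1^2+h_2^2$ a regular element of $\Ann(I)+\Ann(J)$.

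For the converse, assume $C(X)$ is a $W$-ring and let $U,V$ be disjoint open subsets of $X$. Define $I=\{f\in C(X):\coz(f)\subseteq U\}$ and $J=\{g\in C(X):\coz(g)\subseteq V\}$; these are ideals of $C(X)$, and any $h\in I\cap J$ satisfies $\coz(h)\subseteq U\cap V=\tohi$, so $I\cap J=0$. By hypothesis there exist $h_1\in\Ann(I)$ and $h_2\in\Ann(J)$ such that $h=h_1+h_2$ is regular. By complete regularity of $X$, for each $x\in U$ there is an $f\in C(X)$ with $f(x)=1$ vanishing off $U$; thus $U=\bigcup_{f\in I}\coz(f)$, so $h_1\in\Ann(I)$ forces $\coz(h_1)\cap U=\tohi$, i.e.\ $U\subseteq Z(h_1)$, and openness of $U$ gives $U\subseteq \rint Z(h_1)$. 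Symmetrically $V\subseteq \rint Z(h_2)$. Finally, if some nonempty open $W$ lay in $\rint Z(h_1)\cap \rint Z(h_2)$, then $h\equiv 0$ on $W$, contradicting the density of $\coz(h)$. Hence $\rint Z(h_1)\cap \rint Z(h_2)=\tohi$, which shows that $X$ is a $WED$-space.

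The only delicate point is the identification $U=\bigcup_{f\in I}\coz(f)$ in the converse, which rests on the complete regularity of $X$; everything else is a short translation between the topological separation property and the algebraic existence of a regular element in $\Ann(I)+\Ann(J)$.
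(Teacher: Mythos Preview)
Your proof of part~(4) is correct. The paper does not give an explicit proof of this lemma; parts (1)--(3) are citations, and part~(4) is left to follow from them together with the remark just before the lemma that a reduced $f$-ring is strongly wedded (i.e., a $W$-ring) if and only if it is wedded \cite[Lemma~1.4]{DT}, combined with the well-known fact that $C(X)$ is a reduced McCoy $f$-ring. Thus the paper's intended argument is the chain
\[
C(X)\ \text{is }W \ \Longleftrightarrow\ C(X)\ \text{is wedded}\ \Longleftrightarrow\ C(X)\ \text{is }WSA\ \Longleftrightarrow\ X\ \text{is }WED,
\]
using \cite{DT} for the first two equivalences and \cite{AT} for the last.

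Your approach is genuinely different: you bypass the wedded/$WSA$ framework entirely and give a self-contained translation between the topological separation condition and the existence of a regular element in $\Ann(I)+\Ann(J)$. This has the advantage of being elementary and of making the mechanism transparent (the regular witness is the explicit $h_1^2+h_2^2$). The citation-based route, on the other hand, situates~(4) within the general theory of wedded $f$-rings and explains why the McCoy hypothesis is the natural bridge between the $W$ and $WSA$ conditions. A small cosmetic point: what you label the ``forward direction'' is actually the implication $X$ is $WED$ $\Rightarrow$ $C(X)$ is a $W$-ring, i.e., the converse of the stated equivalence; the mathematics is unaffected.
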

 \vspace{3mm}

Now, we introduce a large class of rings which contains $W$-rings.
 
\begin{defn}
A ring $R$ is called a \emph{principally wedded ring}, abbreviated as \emph{$PW$-ring}, (resp., a \emph{$UPW$-ring}) if for every $a,b \in R$ with $aR \cap bR = 0$, the ideal $\Ann(a) + \Ann(b)$ contains a regular element (resp., a unit element).  
More generally, let $\mathcal{D} \subseteq \mathcal{I}(R)$, where $\mathcal{I}(R)$ denotes the set of all ideals of $R$. We say that $R$ is a \emph{$\mathcal{D}$-$W$-ring} (resp., a \emph{$\mathcal{D}$-$UW$-ring}) if for every pair of ideals $I,J \in \mathcal{D}$ with $I \cap J = 0$, the ideal $\Ann(I) + \Ann(J)$ contains a regular element (resp., a unit element).
\end{defn}
It is clear that if $\CD$ is the set of all principal ideals of $R$, then $\CD$-$W$-ring (resp., $\CD$-$UW$-ring) is the same as $PW$-ring (resp., $UPW$-ring).

By definitions, we have these implications: \[ W\text{-ring} \to PW\text{-ring},\quad UW\text{-ring} \to W\text{-ring, and}\quad UPW\text{-ring} \to PW\text{-ring}.\] However, we will see that the converses do not necessarily hold.

\begin{prop}\label{has}
Suppose that for each $\la\in \La$, $R_\la$ is a ring and $R=\prod_{\la\in\La}R_\la$. Then, $R$ is a $PW$-ring ($UPW$-ring) if and only if $R_\la$ is so for every $\la\in\La$.
\end{prop}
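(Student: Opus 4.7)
The plan is to reduce the $PW$ (resp.\ $UPW$) property to componentwise conditions, exploiting two basic facts about a product ring $R=\prod_{\la\in\La}R_\la$. First, an element $c=(c_\la)\in R$ is regular (resp.\ a unit) in $R$ if and only if each $c_\la$ is regular (resp.\ a unit) in $R_\la$: for units this is immediate because inverses can be formed coordinatewise, while for regularity one direction is clear and the other uses the canonical embedding $R_\la\hookrightarrow R$ placing an element in the $\la$-th slot and zeros elsewhere to produce a contradicting zero-divisor. Second, for $a=(a_\la)\in R$ one has $\Ann_R(a)=\prod_\la \Ann_{R_\la}(a_\la)$; and if $\tilde a\in R$ denotes the embedding of an element $a\in R_\la$ into the $\la$-th coordinate, then $\Ann_R(\tilde a)=\Ann_{R_\la}(a)\times\prod_{\mu\neq\la}R_\mu$.

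For the ``only if'' direction, fix $\la_0\in\La$ and $a,b\in R_{\la_0}$ with $aR_{\la_0}\cap bR_{\la_0}=0$. A direct coordinate computation, using the second structural fact, shows $\tilde a R\cap\tilde b R=0$ for the embedded elements. The hypothesis on $R$ yields $c=u+v$ regular (resp.\ a unit) in $R$ with $u\in\Ann_R(\tilde a)$ and $v\in\Ann_R(\tilde b)$; projecting to the $\la_0$-th coordinate, $c_{\la_0}=u_{\la_0}+v_{\la_0}\in\Ann_{R_{\la_0}}(a)+\Ann_{R_{\la_0}}(b)$, and by the first fact $c_{\la_0}$ is regular (resp.\ a unit) in $R_{\la_0}$.

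For the ``if'' direction, suppose each $R_\la$ is a $PW$-ring and take $a,b\in R$ with $aR\cap bR=0$. Testing against elements of $R$ supported at a single coordinate yields $a_\la R_\la\cap b_\la R_\la=0$ for every $\la$. Applying the hypothesis in each $R_\la$ produces $u_\la\in\Ann_{R_\la}(a_\la)$ and $v_\la\in\Ann_{R_\la}(b_\la)$ with $c_\la:=u_\la+v_\la$ regular (resp.\ a unit). Assembling $u=(u_\la)$ and $v=(v_\la)$ in $R$ gives $u\in\Ann_R(a)$, $v\in\Ann_R(b)$, and $c=u+v$ regular (resp.\ a unit) in $R$ by the first fact again.

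The only point needing care is that $aR\cap bR$ is in general \emph{not} equal to $\prod_\la(a_\la R_\la\cap b_\la R_\la)$ when $\La$ is infinite, so the two implications ``$aR\cap bR=0$ in $R$ $\Rightarrow$ $a_\la R_\la\cap b_\la R_\la=0$ in each $R_\la$'' and ``$aR_{\la_0}\cap bR_{\la_0}=0$ in $R_{\la_0}$ $\Rightarrow$ $\tilde a R\cap\tilde b R=0$ in $R$'' must each be verified directly by restricting/extending test elements along a single coordinate. Beyond this, the proof is routine coordinate bookkeeping.
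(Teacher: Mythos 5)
Your proof is correct and follows essentially the same route as the paper's: embed $a,b\in R_{\la_0}$ into a single coordinate for the ``only if'' direction, work coordinatewise for the ``if'' direction, and use that regular (resp.\ unit) elements of $\prod_{\la}R_\la$ are exactly the componentwise regular (resp.\ unit) ones. One small remark: the caution in your final paragraph is unnecessary, since for principal ideals in the \emph{full} direct product one does have $aR=\prod_{\la}a_\la R_\la$ and hence $aR\cap bR=\prod_{\la}(a_\la R_\la\cap b_\la R_\la)$ even for infinite $\La$ (this is precisely the identity the paper invokes); your alternative verification via single-coordinate test elements is of course also valid.
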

\begin{proof}
We prove it for $PW$-ring, the proof for $UPW$-ring is similarly. 

($\RTA$).  Suppose that $\ga$ is an arbitrary element of $\La$, $a_\ga,b_\ga\in R_\ga$ with $a_\ga R_\ga\cap b_\ga R_\ga=0$. Take $x,y\in R$ such that $x_\la=y_\la=0$ for every $\la\neq\ga$, $x_\ga=a_\ga$, and $y_\ga=b_\ga$. Clearly, $xR\cap yR=0$ and so $\Ann(x)+\Ann(y)$ contains a regular element. It is easy to see that; 
$$\Ann(x)+\Ann(y)=\prod_{\la\in\La}\Ann(x_\la)+\prod_{\la\in\La}\Ann(y_\la)$$
$$=\prod_{\la\in\La}(\Ann(x_\la)+\Ann(y_\la))=(\Ann(a_\ga)+\Ann(b_\ga))\times\prod_{\la\in\La\set\{\ga\}}R_\la.$$
Therefore, since $\Ann(x)+\Ann(y)$ contains a regular element, it follows that $\Ann(a_\ga)+\Ann(b_\ga)$ also contains  a regular element.

( $\LTA$). Suppose $a,b\in R$ with $aR\cap bR=0$. It is easily seen that $aR\cap bR=\prod_{\la\in\La}(a_\la R_\la\cap b_\la R_\la)$. Thus, for every $\la\in\La$, $a_\la R_\la\cap b_\la R_\la=0$ and so $\Ann(a_\ga)+\Ann(b_\ga)$ contains a regular element. Consequently, $\Ann(x)+\Ann(y)=\prod_{\la\in\La}(\Ann(x_\la)+\Ann(y_\la))$ contains a regular element.
\end{proof}
\vspace{3mm}
For the next proposition, we need the following well-known lemma. Let $\{R_\lambda:~\lambda \in \Lambda\}$ be a family of rings, and let $R = \prod_{\lambda \in \Lambda} R_\lambda$.

For any $x \in R$, we denote by $x_\lambda$ the $\lambda$-component of $x$, i.e., $x_\lambda = \pi_\lambda(x)$, where $\pi_\lambda : R \to R_\lambda$ is the canonical projection.  
For any $a_\gamma \in R_\gamma$, we denote by $a_\gamma^{c}$ the element of $R$ defined by  
\[
(a_\gamma^{c})_\lambda =
\begin{cases}
a_\gamma, & \lambda = \gamma, \\
0, & \lambda \neq \gamma .
\end{cases}.\]
For any ideal $I \in \mathcal{I}(R)$, we denote by $I_\lambda$ the projection of $I$ onto $R_\lambda$, i.e., $I_\lambda = \pi_\lambda(I)$.  
For any ideal $I_\gamma \in \mathcal{I}(R_\gamma)$, we denote by $I_\gamma^{c}$ the ideal of $R$ defined by  
\[
(I_\gamma^{c})_\lambda =
\begin{cases}
I_\gamma, & \lambda = \gamma, \\
(0), & \lambda \neq \gamma .
\end{cases}
\]
\begin{lem}\label{42}
Let $\{R_\lambda:~\lambda \in \Lambda\}$ be a family of rings, and let $R=\prod_{\lambda\in\Lambda}R_\lambda$. Then the following statements hold:

\begin{enumerate}
    \item For every $I\in\mathcal{I}(R)$, we have 
    \[
    I \subseteq \prod_{\lambda\in\Lambda} I_\lambda.
    \]
    
    \item If $I_\lambda$ is an ideal of $R_\lambda$ for every $\lambda\in\Lambda$, then 
    \[
    \Ann\!\left(\prod_{\lambda\in\Lambda} I_\lambda\right)
    = \prod_{\lambda\in\Lambda}\Ann(I_\lambda).
    \]
    
    \item If $I$ is an arbitrary ideal of $R$, then 
    \[
    I_\lambda^{c} \subseteq I \quad \text{for every } \lambda \in \Lambda.
    \]
    
    \item An element $x\in R$ is regular if and only if each $x_\lambda$ is a regular element of $R_\lambda$.
    
    \item An element $x\in R$ is a unit if and only if each $x_\lambda$ is a unit element of $R_\lambda$.
\end{enumerate}
\end{lem}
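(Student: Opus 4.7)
The plan is to verify each of the five claims by a direct componentwise computation in the product ring. The only real technical device is the systematic use of the single-coordinate elements $a_\gamma^{c}$ (already introduced in the excerpt) and the idempotents $e_\gamma := (1_{R_\gamma})^{c}$, which exist because every $R_\lambda$ has identity. Apart from these, everything is bookkeeping with the formulas $(xy)_\lambda = x_\lambda y_\lambda$ and $x_\lambda = \pi_\lambda(x)$.

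For (1), I would simply take $x \in I$ and note that $x_\lambda = \pi_\lambda(x) \in \pi_\lambda(I) = I_\lambda$ for every $\lambda$, so $x = (x_\lambda)_{\lambda\in\Lambda} \in \prod_\lambda I_\lambda$. For (3), given $a_\gamma \in I_\gamma$, pick any $x \in I$ with $\pi_\gamma(x) = a_\gamma$; then $e_\gamma \in R$ and, since $I$ is an ideal, $e_\gamma \cdot x \in I$. A coordinate check shows $e_\gamma \cdot x = a_\gamma^{c}$, giving $I_\gamma^{c} \subseteq I$.

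For (2), the forward inclusion $\prod_\lambda \Ann(I_\lambda) \subseteq \Ann(\prod_\lambda I_\lambda)$ is immediate from componentwise multiplication: if $y_\lambda \in \Ann(I_\lambda)$ for each $\lambda$ and $x \in \prod_\lambda I_\lambda$, then $(yx)_\lambda = y_\lambda x_\lambda = 0$. For the reverse inclusion, take $y \in \Ann(\prod_\lambda I_\lambda)$ and $a_\gamma \in I_\gamma$; the element $a_\gamma^{c}$ lies in $\prod_\lambda I_\lambda$ (the non-$\gamma$ coordinates are $0 \in I_\lambda$), so $0 = (y\cdot a_\gamma^{c})_\gamma = y_\gamma a_\gamma$, showing $y_\gamma \in \Ann(I_\gamma)$ for every $\gamma$. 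For (4), if every $x_\lambda$ is regular and $xy=0$, then $x_\lambda y_\lambda = 0$ forces $y_\lambda = 0$ for each $\lambda$, hence $y=0$; conversely, if some $x_\gamma$ has a nonzero annihilator $y_\gamma$, then $y_\gamma^{c} \neq 0$ and $x \cdot y_\gamma^{c} = 0$, so $x$ is a zero-divisor. Part (5) is the identical argument applied to the equation $xy = 1_R$, which holds iff $x_\lambda y_\lambda = 1_{R_\lambda}$ for every $\lambda$.

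Since the lemma packages together five standard facts about products of unital commutative rings, no step is a genuine obstacle. The only point worth flagging is the backward direction of (2), where one tacitly uses that $0 \in I_\lambda$ for every $\lambda$ to conclude that $a_\gamma^{c}$ belongs to $\prod_\lambda I_\lambda$; this is automatic since each $I_\lambda$ is an ideal. The lemma will then be available for the subsequent $PW$/$UPW$-ring arguments (most likely to refine Proposition~\ref{has} and handle ideal-level versions).
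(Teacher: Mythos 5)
Your proposal is correct and follows exactly the componentwise route that the paper takes; the paper simply states that all five assertions ``follow directly from the definitions and the componentwise structure'' of the product, and your write-up is just that argument spelled out in full (including the useful single-coordinate elements $a_\gamma^{c}$ and the idempotents $(1_{R_\gamma})^{c}$). Nothing is missing and no step fails.
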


\begin{proof}
Each assertion follows directly from the definitions and the componentwise structure of $R=\prod_{\lambda\in\Lambda}R_\lambda$.
\end{proof}
\begin{prop} Suppose that $R_\la$ is a ring for every $\la\in\La$, $R=\prod_{\la\in\La}R_\la$, and $\CD=\{\prod_{\la\in\La}I_\la: I_\la$ is an ideal of $R_\la\}$. Then the following statements are equivalent: 
\begin{enumerate}
\item $R$ is a $W$-ring (resp., $UW$-ring). 

\item $R$ is a $\CD$-$W$-ring (resp., $\CD$-$UW$-ring). 

\item $R_\la$ is a $W$-ring (resp., $UW$-ring) for every $\la\in\La$. 
\end{enumerate}
\end{prop}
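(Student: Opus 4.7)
The plan is to establish the cycle $(1)\Rightarrow(2)\Rightarrow(3)\Rightarrow(1)$, handling the $W$-ring and $UW$-ring statements uniformly: the only distinction is that ``regular element'' is replaced by ``unit element'', and both notions behave componentwise on $R$ by parts (4) and (5) of Lemma~\ref{42}.

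The implication $(1)\Rightarrow(2)$ is immediate, since $\CD\sub\CI(R)$. For $(2)\Rightarrow(3)$ I would fix $\ga\in\La$ and ideals $I_\ga,J_\ga$ of $R_\ga$ with $I_\ga\cap J_\ga=0$. The ideals $I=I_\ga^c$ and $J=J_\ga^c$ lie in $\CD$ (the $\la$-component is $0$ for $\la\neq\ga$), their intersection is zero, and by Lemma~\ref{42}(2)
\[
\Ann(I)+\Ann(J)=\bigl(\Ann(I_\ga)+\Ann(J_\ga)\bigr)\times\prod_{\la\neq\ga}R_\la.
\]
Hypothesis~(2) then produces a regular (resp., unit) element $r$ of $R$ in this sum, and Lemma~\ref{42}(4)-(5) forces $r_\ga\in\Ann(I_\ga)+\Ann(J_\ga)$ to be regular (resp., a unit) in $R_\ga$.

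The main work lies in $(3)\Rightarrow(1)$. Given $I,J\in\CI(R)$ with $I\cap J=0$, set $I_\la=\pi_\la(I)$ and $J_\la=\pi_\la(J)$. By Lemma~\ref{42}(3), $I_\la^c\sub I$ and $J_\la^c\sub J$, so $I_\la^c\cap J_\la^c\sub I\cap J=0$; reading off the $\la$-component yields $I_\la\cap J_\la=0$. Hypothesis~(3) then provides, for every $\la\in\La$, a regular (resp., unit) element $r_\la\in\Ann(I_\la)+\Ann(J_\la)$. I assemble $r=(r_\la)_{\la\in\La}\in R$, which is regular (resp., a unit) by Lemma~\ref{42}(4)-(5). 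To see that $r\in\Ann(I)+\Ann(J)$, I use the identity
\[
\prod_{\la\in\La}\bigl(\Ann(I_\la)+\Ann(J_\la)\bigr)=\prod_{\la\in\La}\Ann(I_\la)+\prod_{\la\in\La}\Ann(J_\la),
\]
which is a straightforward componentwise check, together with Lemma~\ref{42}(1)-(2), which gives
\[
\prod_{\la\in\La}\Ann(I_\la)=\Ann\Bigl(\prod_{\la\in\La}I_\la\Bigr)\sub\Ann(I),
\]
and analogously for $J$.

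The subtle point—and what makes $(3)\Rightarrow(1)$ the essential direction—is that $I$ and $J$ need not themselves be product ideals. The device is to sandwich each between its small pieces $I_\la^c$ and the full product $\prod_\la I_\la$, using the lower inclusions (together with $I\cap J=0$) to deduce $I_\la\cap J_\la=0$ coordinatewise, and the upper inclusions to transport the componentwise assembled annihilator element back into $\Ann(I)+\Ann(J)$.
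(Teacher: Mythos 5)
Your proposal is correct and follows essentially the same route as the paper: the same cycle $(1)\Rightarrow(2)\Rightarrow(3)\Rightarrow(1)$, the same use of the contracted ideals $I_\ga^c$ for $(2)\Rightarrow(3)$, and the same sandwiching of $I$ between $I_\la^c$ and $\prod_\la I_\la$ via Lemma~\ref{42} to push the componentwise regular (resp., unit) element into $\Ann(I)+\Ann(J)$ in $(3)\Rightarrow(1)$.
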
 
\begin{proof} (1) $\RTA$ (2). It is clear. 

(2) $\RTA$ (3). Let $I_\ga, J_\ga\in\CI(R_\ga)$ be such that $I_\ga\cap J_\ga=(0)$. Hence, \[H=I_\ga^{c}, K=J_\ga^{c}\in\CD,\quad \text{and}\quad H\cap K=(0).\] By assumption , $\Ann(H)+\Ann(K)$ contains a regular element. Therefore, $(\Ann(H)+\Ann(K))_\ga=\Ann(I_\ga)+\Ann(J_\ga)$ contains a regular element. 

(3) $\RTA$ (1). Let $I, J\in\CI(R)$ be such that $I\cap J=(0)$. Since, by Lemma \ref{42}, $I_{\la}^{c}\cap J_\la^{c}\sub I\cap J=(0)$ for every $\la\in\La$, it follows that $I_\la\cap J_\la=(0)$ for every $\la\in\La$. Hence, $\Ann(I_\la)+\Ann(J_\la)$, for every $\la\in\La$, contains a regular element $r_\la$. Thus, by Lemma \ref{42}, \[\Ann(\prod_{\la\in\La}I_\la)+\Ann(\prod_{\la\in\La}J_\la)=\prod_{\la\in\La}(\Ann(I_\la)+\Ann(J_\la))\] contains the regular element $r=(r_\la)\in R$. On the other hand, we have \[\Ann(\prod_{\la\in\La}I_\la)+\Ann(\prod_{\la\in\La}J_\la)\sub\Ann(I)+\Ann(J).\] Therefore, $\Ann(I)+\Ann(J)$ contains the regular element $r$.

The argument for $UW$-rings is entirely analogous.
\end{proof}
Recall from \cite{C} that a ring $R$ is real \ifif  for all $n\in\Bbb N$: \[a_1^2+ a_2^2+...+ a_n^2=0 \Leftrightarrow a_1=a_2=...=a_n=0 \quad(\forall a_1, ..., a_n\in R).\] 
For $n\in\Bbb N$, let us call a ring $R$ an $n$-real ring if for each $a_1, ..., a_n\in R$, the equality $a_1^2+ a_2^2+...+ a_n^2=0$ implies $a_1=a_2=...= a_n=0$. Evidently, a ring $R$ is real \ifif it is $n$-real for each $n\in\Bbb N$. Now, we give a characterization of real rings.

\begin{lem}\label{AR}
A ring $R$ is real \ifif $R$ is reduced and for each $n\in\Bbb N$ and for all $a_1, ..., a_n\in R$, we have,    $\bigcap_{i=1}^{n}h_m(a_i)=h_m(a_1^2+ a_2^2+ ... + a_n^2)$.
\end{lem}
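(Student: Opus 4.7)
The plan is to prove the lemma in the two directions separately, with the key computation in each being the identity $c^{2}\bigl(a_{1}^{2}+\cdots+a_{n}^{2}\bigr)=(ca_{1})^{2}+\cdots+(ca_{n})^{2}$ combined with the annihilator characterization of minimal primes in Lemma~\ref{p4}.

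For the direction ($\Rightarrow$), assume $R$ is real. First I would observe that reducedness is free: the case $n=1$ of the definition reads $a^{2}=0\Rightarrow a=0$. Next I would dispose of the easy inclusion $\bigcap_{i=1}^{n}h_{m}(a_{i})\subseteq h_{m}(a_{1}^{2}+\cdots+a_{n}^{2})$, which holds in any ring because any prime containing every $a_{i}$ contains every $a_{i}^{2}$ and hence their sum. The substantive inclusion is the opposite one: given a minimal prime $P$ with $a_{1}^{2}+\cdots+a_{n}^{2}\in P$, I would apply Lemma~\ref{p4} to this element to produce $c\notin P$ with $c\bigl(a_{1}^{2}+\cdots+a_{n}^{2}\bigr)=0$, then multiply through by $c$ once more to obtain $(ca_{1})^{2}+\cdots+(ca_{n})^{2}=0$. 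Realness of $R$ now forces $ca_{i}=0$ for every $i$, and primality of $P$ with $c\notin P$ yields $a_{i}\in P$, so $P\in h_{m}(a_{i})$ for every $i$.

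For the direction ($\Leftarrow$), assume $R$ is reduced and that the displayed equality holds for every $n$ and every choice of $a_{1},\ldots,a_{n}$. Given any relation $a_{1}^{2}+\cdots+a_{n}^{2}=0$, the hypothesis applied to these $a_{i}$'s reads $\bigcap_{i=1}^{n}h_{m}(a_{i})=h_{m}(0)=\Min(R)$, so every minimal prime of $R$ contains every $a_{i}$. Since $R$ is reduced, the intersection of all minimal primes equals the nilradical $(0)$, and so each $a_{i}=0$ follows at once. This step is also where reducedness is genuinely used; without it one only concludes that each $a_{i}$ is nilpotent.

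The only nontrivial ingredient is the trick of multiplying Lemma~\ref{p4}'s relation $c\bigl(\sum a_{i}^{2}\bigr)=0$ by $c$ a second time, in order to turn it into a new sum of squares and so activate the hypothesis that $R$ is real. I do not anticipate any further obstacle; no additional assumption on $R$ (such as bounded inversion or an $f$-ring structure) appears to be required.
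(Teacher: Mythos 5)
Your proposal is correct and follows essentially the same route as the paper's own proof: the forward direction uses Lemma~\ref{p4} to produce $c\notin P$ with $c\bigl(\sum a_i^2\bigr)=0$, multiplies by $c$ again to get $\sum (ca_i)^2=0$, and invokes realness; the backward direction reduces to $\bigcap_i h_m(a_i)=\Min(R)$ and uses reducedness. The only difference is that you make explicit the second multiplication by $c$, which the paper leaves implicit.
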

\begin{proof}
$\Rightarrow$ Evidently, $R$ is a reduced ring. Always we have: \[\bigcap_{i=1}^{n}h_m(a_i)\subseteq h_m(a_1^2+ a_2^2+ ... + a_n^2).\] Now, assume $P\in h_m(a_1^2+ a_2^2+ ... + a_n^2)$. Then, by Lemma  \ref{p4}, there exists $c\notin P$ such that $c(a_1^2+ a_2^2+ ... + a_n^2)=0$. Thus, \[(ca_1)^2+ (ca_2)^2+...+ (ca_{n})^2=0.\] By hypothesis, $ca_n=0$, for each $n\in\Bbb N$, and hence $ca_n\in P$. This implies $a_n\in P$, for each $n\in\Bbb N$, i.e., $P\in \bigcap_{n\in\Bbb N}h_m(a_n)$. So, we are done.

$\Leftarrow$  Let $n\in\Bbb N$ and $a_1, a_2,..., a_n\in R$ with $a_1^2+ a_2^2+...+ a_n^2=0$. Then, \[h_m(a_1)\cap h_m(a_2)\cap ... \cap h_m(a_n)= h_m(a_1^2+ a_2^2+...+ a_n^2)=h_m(0)=\Min(R).\] This implies that $h_m(a_1)=h_m(a_2)=...=h_m(a_n)=\Min(R)$.  Since, $R$ is a reduced ring, $a_1= a_2=...= a_n=0$.
\end{proof}
\begin{thm}\label{AR1}
Let $R$ be a reduced ring. The following statements hold.
\begin{enumerate}
\item $R$ is  a $W$ (resp., $UW$)-ring \ifif for each  ideal $I$ of $R$, $\Ann(I)+\Ann^{2}(I)$ contains a regular element (resp., a unit element).
 
\item If $R$ is a $W$-ring, then every two disjoint open sets in $\Min(R)$ can be separated by two disjoint basic closed elements. 

\item The converse of Part (2) for  a real ring ($2$-real) holds. 
\end{enumerate}
\end{thm}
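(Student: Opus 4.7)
For Part (1), I would unpack the $W$-ring definition directly. For the forward direction, given any ideal $I$, apply the $W$-ring property to the pair $(I,\Ann(I))$: these satisfy $I\cap\Ann(I)=0$, because any $x$ in the intersection gives $x^{2}=0$ and hence $x=0$ by reducedness. The property then yields a regular element in $\Ann(I)+\Ann(\Ann(I))=\Ann(I)+\Ann^{2}(I)$. For the converse, given $I,J$ with $I\cap J=0$, the inclusion $J\sub\Ann(I)$ yields $\Ann^{2}(I)\sub\Ann(J)$, hence $\Ann(I)+\Ann^{2}(I)\sub\Ann(I)+\Ann(J)$, and any regular element of the smaller ideal lies in the larger. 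The $UW$-ring version is identical with ``regular'' replaced by ``unit''.

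For Part (2), given disjoint open sets $U,V\sub\Min(R)$, I would encode them as ideals
\[\tilde{I}_{U}=\langle u\in R:~D(u)\sub U\rangle\quad\text{and}\quad \tilde{I}_{V}=\langle v\in R:~D(v)\sub V\rangle.\]
Since $U\cap V=\tohi$, every generator $u$ of $\tilde{I}_{U}$ and $v$ of $\tilde{I}_{V}$ satisfy $D(uv)=D(u)\cap D(v)=\tohi$, so $uv=0$ by reducedness. Hence $\tilde{I}_{U}\cdot\tilde{I}_{V}=0$, and reducedness once more gives $\tilde{I}_{U}\cap\tilde{I}_{V}=0$. The $W$-ring hypothesis then produces a regular element $r=s+t$ with $s\in\Ann(\tilde{I}_{U})$ and $t\in\Ann(\tilde{I}_{V})$. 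I would claim that $h_{m}(s)$ and $h_{m}(t)$ are the required basic closed sets: the containment $U\sub h_{m}(s)$ follows because $s\cdot u=0$ for each generator $u$ implies $D(s)\cap D(u)=\tohi$, so $D(u)\sub\Min(R)\set D(s)=h_{m}(s)$, and $U$ is the union of such $D(u)$; similarly $V\sub h_{m}(t)$. Finally, $h_{m}(s)\cap h_{m}(t)=\tohi$ is equivalent in a reduced ring (via Lemma~\ref{p4}) to $\Ann(s)\cap\Ann(t)=0$, and any $c$ in this intersection satisfies $cr=cs+ct=0$, forcing $c=0$ by regularity of $r$.

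For Part (3), assuming $R$ is $2$-real and the separation property holds, for ideals $I,J$ with $I\cap J=0$ I would take the opens $U=\bigcup_{a\in I}D(a)$ and $V=\bigcup_{b\in J}D(b)$ in $\Min(R)$. They are disjoint, since $P\in U\cap V$ would give $a\in I$, $b\in J$ with $a,b\notin P$ yet $ab\in I\cap J=0\sub P$, a contradiction. The hypothesis yields $s,t\in R$ with $U\sub h_{m}(s)$, $V\sub h_{m}(t)$, and $h_{m}(s)\cap h_{m}(t)=\tohi$; Lemma~\ref{p4} translates the first two containments into $sI=0$ and $tJ=0$, i.e.\ $s\in\Ann(I)$ and $t\in\Ann(J)$. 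The $2$-real hypothesis then gives $h_{m}(s^{2}+t^{2})=h_{m}(s)\cap h_{m}(t)=\tohi$ (the $n=2$ case of the argument used for Lemma~\ref{AR} requires only $2$-realness), so $s^{2}+t^{2}$ is regular; since $s^{2}\in\Ann(I)$ and $t^{2}\in\Ann(J)$, this produces a regular element of $\Ann(I)+\Ann(J)$. The main obstacle is the encoding step in Part (2): correctly building the ideals $\tilde{I}_{U},\tilde{I}_{V}$ to capture arbitrary (not merely basic) disjoint opens, and translating the abstract regularity of $r=s+t$ into the geometric disjointness $h_{m}(s)\cap h_{m}(t)=\tohi$. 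Once the equivalence $h_{m}(a)\cap h_{m}(b)=\tohi\Leftrightarrow\Ann(a)\cap\Ann(b)=0$ in reduced rings is in hand, the remainder of (2) and (3) is essentially bookkeeping.
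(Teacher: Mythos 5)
Your proposal is correct and follows essentially the same route as the paper: Part (1) via the pair $(I,\Ann(I))$ and the inclusion $\Ann^{2}(I)\subseteq\Ann(J)$, Part (2) by encoding the opens as ideals generated by basic-open data and applying the $W$-property, and Part (3) by reversing the encoding and using $2$-realness to make $s^{2}+t^{2}$ regular. The only (immaterial) difference is in Part (2), where you derive $h_m(s)\cap h_m(t)=\emptyset$ from the equivalence with $\Ann(s)\cap\Ann(t)=0$ via Lemma~\ref{p4}, while the paper gets it from $h_m(x)\cap h_m(y)\subseteq h_m(x+y)=\emptyset$ using Lemma~\ref{p3}.
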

\begin{proof}
(1 $\RTA$). Since $R$ is a reduced ring, for each ideal $I$ of $R$, $I\cap \Ann(I)=0$. By hypothesis, $\Ann(I)+\Ann^{2}(I)$ contains a regular element (resp., a unit element).

(1 $\LTA$). Let $I, J$ be two ideals of $R$ with $I\cap J=0$. Then $J\subseteq\Ann(I)$ and hence $\Ann^{2}(I)\subseteq\Ann(J)$. Thus, \[\Ann(I)+\Ann^{2}(I)\subseteq \Ann(I)+\Ann(J).\] By hypothesis, $\Ann(I)+\Ann^{2}(I)$ contains a regular element (resp., a unit element). Therefore, $\Ann(I)+\Ann(J)$ contains a regular element (resp., a unit element).

(2)  Let $A, B$ be two disjoint open sets in $Y=\Min(R)$. Then,  there are two subsets $S, K$ of $R$ such that \[A=\bigcup_{a\in S}h_m^c(a) \quad\text{and}\quad B=\bigcup_{b\in K}h_m^c(b).\] Put $I=<S>$ and $J=<K>$. Since $A\cap B=\emptyset$, it follows that for each $a\in S$,  \[h_m^c(aJ)=h_m^c(a)\cap h_m^c(J)=h_m^c(a)\cap h_m^c(K)=\emptyset,\] and hence $aJ=0$. Therefore, $IJ=0$ and so $I\cap J=0$. Thus, by hypothesis, $\Ann(I)+\Ann(J)$ contains a regular element say $c$. Hence, there are $x\in \Ann(I)$ and $y\in\Ann(J)$ such that $c=x+y$. The regularity of $c$ implies that $\Ann(c)=0$, thus by Lemma \ref{p3}, $h_m(c)=\rint_Yh_m(c)=\emptyset$ and so $h_m(x)\cap h_m(y)=\emptyset$. On the other hand, $x\in\Ann(I)$ and $y\in\Ann(J)$ imply $xI=0$ and $yJ=0$. Thus we have:
\[A=\bigcup_{a\in S}h_m^c(a)=h_m^c(I)\subseteq h_m(x)\quad\text{and}\quad B=\bigcup_{b\in K}h_m^c(b)=h_m^c(J)\subseteq h_m(y).\]
(3) Let $I, J$ be two  ideals of $R$ with $I\cap J=0$. Then $IJ=0$ and hence, \[h_m^c(I)\cap h_m^c(J)=h_m^c(IJ)=\emptyset.\] By hypothesis, there are $a, b\in R$ such that $h_m^c(I)\subseteq h_m(a)$, $h_m^c(J)\subseteq h_m(b)$ and $h_m(a)\cap h_m(b)=\emptyset$. Thus, \[h_m^c(Ia)=h_m^c(I)\cap h_m^c(a)=\emptyset,\quad  h_m^c(Jb)=h_m^c(J)\cap h_m^c(b)=\emptyset,\] and $h_m(a)\cap h_m(b)=\emptyset$. Hence, $aI=0$, $bJ=0$ and \[\rint_Yh_m(a^2+ b^2)=h_m(a^2+ b^2)=h_m(a)\cap h_m(b)=\emptyset,\] by Lemmas \ref{p3} and  \ref{AR}. Therefore, $a\in\Ann(I)$, $b\in\Ann(J)$ and $\Ann(a^2+ b^2)=0$, i.e., $a^2+ b^2$ is a regular element in $\Ann(I)+\Ann(J)$.
\end{proof}
The following result shows that the class of $UPW$-rings is a subclass of reduced rings.
\begin{prop}\label{dava}
The following statements are equivalent for any ring $R$.
\begin{enumerate}
\item $R$ is  a $UPW$-ring 

\item For each $a, b\in R$, we have $\Ann(a)+\Ann(b)=\Ann(ab)$.

\item $R$  is a reduced ring and any two disjoint basic open elements in $\Spec(R)$ have disjoint closure.
\end{enumerate}
\end{prop}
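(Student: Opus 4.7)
The plan is to close the triangle (1)$\Leftrightarrow$(2) algebraically via annihilator manipulations, and to obtain (1)$\Leftrightarrow$(3) by translating through the spectral closure identity $\overline{D(a)} = h(\Ann(a))$, which holds in reduced rings.

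The easy direction (2)$\Rightarrow$(1) is immediate: $aR\cap bR=0$ forces $ab\in aR\cap bR=(0)$, hence $\Ann(ab)=R$, and (2) yields $\Ann(a)+\Ann(b)=R\ni 1$. An auxiliary observation, used freely below, is that (2) by itself forces $R$ to be reduced: setting $b=a$ gives $\Ann(a)=\Ann(a^2)$, iteration yields $\Ann(a)=\Ann(a^{2^k})$ for every $k$, and any $a$ with $a^N=0$ then satisfies $\Ann(a)=R$ once $2^k\geq N$, forcing $a=0$.

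For (1)$\Rightarrow$(2), after first establishing that UPW forces $R$ to be reduced, take any $c\in\Ann(ab)$. Then $(ac)b=cab=0$, and for $x=acr=bs\in acR\cap bR$ one computes $x^2=abc\cdot rs=0$, so $x=0$ by reducedness. Applying UPW to the pair $(ac,b)$ yields $\Ann(ac)+\Ann(b)=R$; writing $1=u+v$ with $u\cdot ac=0$ and $vb=0$, the decomposition $c=uc+vc$ places $c$ in $\Ann(a)+\Ann(b)$, since $(uc)a=u(ac)=0$ and $(vc)b=c(vb)=0$. For (1)$\Leftrightarrow$(3), once $R$ is reduced the identity $\overline{D(a)}=h(\Ann(a))$ is available: basic-open disjointness $D(a)\cap D(b)=D(ab)=\emptyset$ is equivalent to $ab=0$ and hence to $aR\cap bR=0$ (by the same square-zero computation), while closure disjointness $h(\Ann(a))\cap h(\Ann(b))=\emptyset$ is equivalent to $\Ann(a)+\Ann(b)=R$, making (3) precisely the UPW condition phrased in the spectrum.

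The principal obstacle is the step ``(1)$\Rightarrow R$ reduced''; because UPW's hypothesis $aR\cap bR=0$ is vacuous on principal pairs in rings with many nilpotents, this implication is delicate, and the cleanest workaround is to deduce (2) from (1) directly and then invoke the self-reducing property of (2) recorded above.
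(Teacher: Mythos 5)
Your route for (2)$\Rightarrow$(1), for the self-reducing property of condition (2), and for (1)$\Leftrightarrow$(3) via the closure identity $\overline{D(a)}=h(\Ann(a))$ is sound, and your (1)$\Rightarrow$(2) computation (apply the UPW hypothesis to the pair $(ac,b)$ for $c\in\Ann(ab)$, write $1=u+v$, multiply through by $c$) is exactly the paper's argument. The genuine gap is the one you yourself flag and then do not close: you never prove that a UPW-ring is reduced, and your proposed workaround is circular, because your verification that $acR\cap bR=0$ (the hypothesis needed before UPW may be invoked on the pair $(ac,b)$) already uses reducedness, while reducedness was to be extracted from the conclusion (2) of that very implication. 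Worse, the missing step is not merely delicate but false as stated: in $R=k[X]/(X^2)$ the ideals form a chain, so $aR\cap bR=0$ forces $a=0$ or $b=0$ and $R$ is a UPW-ring vacuously, yet $R$ is not reduced and $\Ann(X)+\Ann(X)=(X)\neq R=\Ann(X^2)$, so both (2) and (3) fail. Hence (1)$\Rightarrow$(2) cannot be established for ``any ring'' by any argument.

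To be fair, the paper's own proof has the identical defect: it invokes the UPW hypothesis on the pair $(ax,b)$ knowing only $xab=0$, which shows that every element of $(ax)R\cap bR$ squares to zero but not that this intersection vanishes, and it then derives reducedness from (2). Both the statement and the two proofs become correct if one either assumes $R$ reduced at the outset or reads the UPW condition with the hypothesis $ab=0$ in place of $aR\cap bR=0$; these coincide for reduced rings, which covers the intended application to $C(X)$. Under either repair your argument is complete and essentially coincides with the paper's, apart from your more self-contained treatment of (1)$\Leftrightarrow$(3), which the paper delegates to Proposition 2.17 of the cited reference.
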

\begin{proof}
(1)$\Rightarrow$(2)  Evidently, for each $a, b\in R$, $\Ann(a)+\Ann(b)\subseteq\Ann(ab)$. Now, let $x\in\Ann(ab)$. Then $xab=0$. By hypothesis, $\Ann(ax)+\Ann(b)=R$. Thus $1=y+z^{(1)}$, where $y\in\Ann(ax)$ and $z\in\Ann(b)$. Thus, $yx\in\Ann(a)$ and $xz\in\Ann(b)$. By multiplying  the equality $(1)$ by $x$,  $x=xy+ xz\in\Ann(a)+\Ann(b)$. So we done.

(2)$\Rightarrow$(1) This is obvious.

(1)$\Rightarrow$(3) Let $a\in R$ and $a^2=0$. By Part (2) , we have: \[\Ann(a)=\Ann(a)+\Ann(a)=\Ann(a^2)=R.\] Thus, $a=0$. For the remainder of the proof see Proposition 2.17 in \cite{AlTaTa}.

(3)$\Rightarrow$(1) This follows from Part (2) and the Proposition 2.17 in \cite{AlTaTa}.
\end{proof}
Theorem 3.6 in \cite{AlTaTa} implies the next result.
\begin{cor}\label{exa}
$C(X)$ is a $UPW$-ring \ifif $X$ is an $F$-space. 
\end{cor}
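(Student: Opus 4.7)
The plan is to apply Proposition~\ref{dava} to $R=C(X)$ and then translate the resulting algebraic condition into the complete-separation characterization of $F$-spaces given by Lemma~\ref{p1}. The key dictionary is the standard identification: for $f\in C(X)$ one has $\Ann(f)=\{h\in C(X):\coz(h)\cap\coz(f)=\emptyset\}$, and the condition $aC(X)\cap bC(X)=0$ is equivalent to $\coz(a)\cap\coz(b)=\emptyset$. Indeed, if the cozero-sets meet then $ab$ itself is a nonzero element of the intersection of the principal ideals, while any element of both $aC(X)$ and $bC(X)$ has its cozero-set contained in $\coz(a)\cap\coz(b)$.

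For the forward implication, suppose $X$ is an $F$-space and take $a,b\in C(X)$ with $aC(X)\cap bC(X)=0$, so that $\coz(a)\cap\coz(b)=\emptyset$. Lemma~\ref{p1} supplies a $u\in C(X)$ with $u\equiv 0$ on $\coz(a)$ and $u\equiv 1$ on $\coz(b)$, whence $ua=0$ and $(1-u)b=0$. Therefore $1=(1-u)+u\in\Ann(a)+\Ann(b)$, and the unit $1$ lies in the required ideal.

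For the reverse implication, assume $C(X)$ is $UPW$ and let $\coz(a),\coz(b)$ be disjoint cozero-sets. Then $aC(X)\cap bC(X)=0$, so there exist $f,g\in C(X)$ with $f+g=1$, $fa=0$, and $gb=0$. The function $g=1-f$ then vanishes on $\coz(b)$ and equals $1$ on $\coz(a)$ (since $fa=0$ forces $f\equiv 0$ on $\coz(a)$), completely separating the two cozero-sets. Applying Lemma~\ref{p1} again yields that $X$ is an $F$-space.

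No serious obstacle is anticipated: the only delicate step is the routine dictionary between principal ideals and annihilators in $C(X)$ and their cozero-set counterparts. One could alternatively bypass the direct argument by invoking Proposition~\ref{dava}(2), which says $C(X)$ is $UPW$ iff $\Ann(a)+\Ann(b)=\Ann(ab)$ for all $a,b\in C(X)$, and then citing Theorem~3.6 of \cite{AlTaTa} to identify this with the $F$-space property.
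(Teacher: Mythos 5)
Your argument is correct, but it is not the route the paper takes: the paper proves this corollary in one line by citing Theorem~3.6 of \cite{AlTaTa} together with the characterization in Proposition~\ref{dava}(2) (namely that $C(X)$ is $UPW$ iff $\Ann(a)+\Ann(b)=\Ann(ab)$ for all $a,b$), whereas you give a direct, self-contained verification from the definition of a $UPW$-ring and Lemma~\ref{p1} alone. Your dictionary is sound: $aC(X)\cap bC(X)=0$ iff $\coz(a)\cap\coz(b)=\emptyset$ (the nontrivial direction being witnessed by $ab$), and $h\in\Ann(f)$ iff $\coz(h)\cap\coz(f)=\emptyset$. The forward direction correctly produces $u\in\Ann(a)$ and $1-u\in\Ann(b)$ from a completely separating function, and the reverse direction correctly extracts a separating function $g$ from a decomposition $f+g=1$ with $f\in\Ann(a)$, $g\in\Ann(b)$; the only step you elide is that ``$\Ann(a)+\Ann(b)$ contains a unit'' must first be upgraded to ``contains $1$,'' which is immediate since an ideal containing a unit is the whole ring. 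What your approach buys is independence from the external reference \cite{AlTaTa}; what the paper's approach buys is brevity and consistency with the surrounding results (Proposition~\ref{dava} and Proposition~\ref{es2}), which are all phrased through the $\Ann(a)+\Ann(b)=\Ann(ab)$ identity. You correctly flag the paper's route as your ``alternative'' in the final paragraph.
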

Thus,  whenever $X$ is a non-$F$-space, $C(X)$ is a reduced ring which is not a $UPW$-ring.

We recall that, for an ideal $I$ of  a reduced ring $R$ with strong annihilator condition (i.e., $s.a.c$-property, e.g., $C(X)$), we have: \[I_{\circ}=\{a\in R: \exists b\in I,\quad\text{such that}\quad \Ann(b)\subseteq\Ann(a)\}.\] It is also useful to note that for a reduced ring $R$ and $a\in R$ we have: \[P_a=\{x\in R: \Ann(a)\subseteq\Ann(x)\}.\] Next result shows that the class of $UPW$-rings (hence $PW$-rings) is very large.
\begin{prop}
The following statements hold.
\begin{enumerate}
\item Every  $WSA$-ring with $s.a.c$-property  is a $PW$-ring. 

\item Every $PP$-ring is a $UPW$-ring.

\item Every reduced $IN$-ring (hence every  Baer ring) is a $UPW$-ring.

\end{enumerate}
\end{prop}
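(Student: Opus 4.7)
The plan is to reduce each of the three items to a one-step application of the defining hypothesis at the principal ideals $aR$ and $bR$, paired with a single structural fact. For Part (1), I would specialise the $WSA$ hypothesis to $I=aR$ and $J=bR$ under the assumption $aR\cap bR=0$, obtaining $(\Ann(a)+\Ann(b))_{\circ}=R$. Using the s.a.c.\ description of the $z^{\circ}$-closure recalled just before the proposition, the containment $1\in(\Ann(a)+\Ann(b))_{\circ}$ produces an element $c\in \Ann(a)+\Ann(b)$ with $\Ann(c)\subseteq \Ann(1)=0$; this $c$ is a regular element of $\Ann(a)+\Ann(b)$, so $R$ is a $PW$-ring.

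For Part (2), the preliminary step is to verify that every $PP$-ring is reduced: if $a^{2}=0$ and $\Ann(a)=eR$ with $e^{2}=e$, then $a\in eR$ forces $a=ea=0$. In a reduced commutative ring the conditions $aR\cap bR=0$ and $ab=0$ coincide, since any $x=ar=bs$ in the intersection satisfies $x^{2}=abrs=0$. Now fix $a,b$ with $aR\cap bR=0$ and write $\Ann(a)=eR$, $e^{2}=e$. Then $b\in \Ann(a)=eR$, hence $b=eb$, i.e.\ $(1-e)b=0$, so $1-e\in \Ann(b)$. Therefore $1=e+(1-e)\in \Ann(a)+\Ann(b)$, showing $R$ is a $UPW$-ring.

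For Part (3), I would invoke the Ikeda--Nakayama identity $\Ann(I\cap J)=\Ann(I)+\Ann(J)$ at $I=aR$, $J=bR$ with $aR\cap bR=0$; the right-hand side collapses to $\Ann(0)=R$, so $1\in \Ann(a)+\Ann(b)$ and $R$ is $UPW$. The Baer clause follows immediately because a commutative Baer ring is in particular a $PP$-ring (annihilators of single elements are special cases of annihilators of subsets, hence idempotent-generated), so Part (2) applies.

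None of these arguments is technically deep; the main obstacle is conceptual rather than computational. One must pair the correct form of each hypothesis with the correct elementary observation, and in Part (1) in particular remember that, under the s.a.c.\ description, ``$1\in I_{\circ}$'' is exactly the assertion that $I$ contains a regular element — which is precisely what the $PW$-condition demands. In Parts (2) and (3), the reducedness hypothesis (automatic for $PP$-rings and for commutative Baer rings) is what permits the translation between $aR\cap bR=0$ and $ab=0$ needed to apply the idempotent-annihilator / $IN$-identity respectively.
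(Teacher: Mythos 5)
Your proposal is correct. Part (1) is essentially the paper's own argument: specialise $WSA$ to the principal ideals, then read off a regular element from the s.a.c.\ description of $(\Ann(a)+\Ann(b))_{\circ}$ via $1\in(\Ann(a)+\Ann(b))_{\circ}$. Parts (2) and (3), however, take genuinely different routes. For (2) the paper proves the identity $\Ann(a)+\Ann(b)=eR+fR=(e+f-ef)R=\Ann(ab)$ for \emph{all} $a,b$ and then invokes the characterization of $UPW$-rings in Proposition~\ref{dava}; you instead argue directly that $aR\cap bR=0$ forces $b\in\Ann(a)=eR$, hence $1=e+(1-e)\in\Ann(a)+\Ann(b)$. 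Your version is more economical (it uses only the idempotent generating $\Ann(a)$ and avoids the detour through Proposition~\ref{dava}), while the paper's version records the stronger structural fact $\Ann(a)+\Ann(b)=\Ann(ab)$ along the way. One small redundancy: your reducedness digression in (2) is not needed for the implication you actually use, since $ab\in aR\cap bR$ holds in any commutative ring, so $aR\cap bR=0$ already gives $ab=0$ (reducedness is only needed for the converse, which plays no role here). For (3) the paper simply cites Theorem~2.14 of \cite{AlTaTa}, whereas you give a self-contained one-line verification from the Ikeda--Nakayama identity $\Ann(aR\cap bR)=\Ann(aR)+\Ann(bR)$ and obtain the Baer clause by routing it through Part (2) via the $PP$-property rather than through the reduced-$IN$ statement; both derivations of the Baer case are valid, and your treatment has the advantage of making the proposition independent of the external reference.
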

\begin{proof}
(1) Let $a, b\in R$ and  $Ra\cap Rb=0$. Then, by hypothesis, $(\Ann(a) +\Ann(b))_{\circ}=R$. Hence, $1\in (\Ann(a)+\Ann(b))_{\circ}$. This and the above comment imply the existence of and element $c\in\Ann(a)+ \Ann(b)$ with $\Ann(c)=0$. 

(2) Let $a, b\in R$. By hypothesis, there are idempotents $e, f\in R$ such that $\Ann(a)=eR$ and $\Ann(b)=fR$. Thus, $\Ann(a)+\Ann(b)= eR+ fR=(e+f-ef)R$. It is easy to see that $(e+f-ef)R=\Ann(ab)$. Hence $\Ann(a)+\Ann(b)=\Ann(ab)$. So we are done, by Proposition \ref{dava}.

(3) This follows from Theorem 2.14 in \cite{AlTaTa}.
\end{proof}
\begin{prop}\label{Du}
Let $R$ be a reduced $f$-ring with bounded inversion. Then:
\begin{enumerate}
\item $R$ is a $PW$-ring if and only if its bounded part $R^*$ is a $PW$-ring.
\item $R$ is a $UPW$-ring if and only if its bounded part $R^*$ is a $UPW$-ring.
\end{enumerate}
\end{prop}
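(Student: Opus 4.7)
My approach rests on three observations valid in any reduced $f$-ring $R$ with bounded inversion, which I will establish first. First (\emph{boundification}): for every $a\in R$, since $1+a^2\geq 1$, bounded inversion makes $1+a^2$ invertible, and the $f$-ring inequality $|a|\leq 1+a^2$ forces $\hat a:=a/(1+a^2)\in R^*$, with $aR=\hat a R$ and $\Ann_R(a)=\Ann_R(\hat a)$. Second (\emph{zero-product reformulation}): in any reduced commutative ring, $aR\cap bR=0$ is equivalent to $ab=0$ (the forward direction from $ab\in aR\cap bR$; the reverse from $(as)(bt)=abst=0$ together with $aR\cap\Ann(a)=0$ by reducedness). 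Hence, for elements of $R^*$, the hypothesis ``$aR\cap bR=0$'' may be read in $R$ or in $R^*$ interchangeably. Third (\emph{transfer of regularity/units}): if $c\in R^*$ is regular in $R^*$, then it is regular in $R$ (any $r\in R$ with $rc=0$ satisfies $\hat r c=0$, so $\hat r=0$ and hence $r=0$); and a unit of $R^*$ is patently a unit of $R$.

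With these in hand, for the \emph{forward direction} of both (1) and (2), I suppose $a,b\in R$ with $aR\cap bR=0$, i.e., $ab=0$. Replacing $a,b$ by $\hat a,\hat b\in R^*$ gives $\hat a\hat b=0$ and so $\hat aR^*\cap\hat bR^*=0$. The hypothesis on $R^*$ produces $c=x+y\in\Ann_{R^*}(\hat a)+\Ann_{R^*}(\hat b)$ that is regular (resp.\ a unit). Since $\Ann(\hat a)=\Ann(a)$ and $\Ann(\hat b)=\Ann(b)$, the same $c$ lies in $\Ann_R(a)+\Ann_R(b)$ and retains its regularity (resp.\ unit status) in $R$ by the third observation.

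For the \emph{backward direction of (1)}, take $a,b\in R^*$ with $aR^*\cap bR^*=0$, i.e., $ab=0$. The $PW$ hypothesis on $R$ yields a regular $c=x+y$ with $x\in\Ann_R(a)$ and $y\in\Ann_R(b)$. Put $u=1+x^2+y^2$, a unit by bounded inversion, and set $x'=x/u$, $y'=y/u$. Since $|x|\leq 1+x^2\leq u$ and similarly for $y$, both lie in $R^*$; they still annihilate $a$ and $b$ because $u$ is a unit, and $x'+y'=c/u$ has the same annihilator as $c$, hence is regular in $R$ and therefore (by the third observation) in $R^*$.

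The real difficulty is the \emph{backward direction of (2)}, because the naive scaling trick above destroys the equation $1=x+y$ needed for ``unit''. Given $a,b\in R^*$ with $ab=0$, the $UPW$ hypothesis on $R$ gives a unit $u\in\Ann_R(a)+\Ann_R(b)$, and dividing by $u$ reduces to $1=x+y$ with $xa=0$, $yb=0$. I exploit the fact that, in a reduced $f$-ring, $\Ann(a)$ is an $\ell$-ideal: from $|rt|=|r||t|$ and reducedness, if $|s|\leq|x|$ and $xa=0$ then $|s||a|\leq|x||a|=0$, whence $sa=0$. Define $e=(x\vee 0)\wedge 1\in R^*$; then $|e|\leq|x|$, so $e\in\Ann(a)$. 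A standard $f$-ring computation (using $c-(p\wedge q)=(c-p)\vee(c-q)$ and $c-(p\vee q)=(c-p)\wedge(c-q)$) gives
\[
1-e\;=\;(y\wedge 1)\vee 0,
\]
which lies in $R^*$ and satisfies $|1-e|\leq|y|$, so $1-e\in\Ann(b)$. Thus $1=e+(1-e)\in\Ann_{R^*}(a)+\Ann_{R^*}(b)$, showing $R^*$ is $UPW$. The main obstacle is precisely this lattice truncation argument, which relies essentially on $R$ being a reduced $f$-ring.
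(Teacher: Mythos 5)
Your proof is correct, and in three of the four implications it follows essentially the same route as the paper: you pass between $R$ and $R^*$ by replacing $a$ with $a/(1+a^2)$ (the paper uses $a/(1+|a|)$), you note that annihilators and the condition $aR\cap bR=0$ (equivalently $ab=0$ in a reduced ring) survive this replacement, and in the $PW$ case you rescale the regular witness $x+y$ by the unit $1+x^2+y^2$ to land in $R^*$, exactly as the paper rescales by $h_1^2+h_2^2+1$. The genuine divergence is in the implication ``$R$ is $UPW$ $\Rightarrow$ $R^*$ is $UPW$.'' There the paper divides the unit $h=h_1+h_2$ by $v=1+|h_1|+|h_2|$ and asserts that $h/v$ is a unit of $R^*$; but the inverse $v/h$ need not be bounded (already in $C(\mathbb{R})$ a unit of $C(X)$ can tend to $0$), so that step as written only yields a regular element of $R^*$. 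You identify exactly this difficulty and resolve it by a different device: dividing by the unit itself gives $1=x+y$ with $xa=yb=0$, and the lattice truncations $e=(x\vee 0)\wedge 1$ and $1-e=(y\wedge 1)\vee 0$ produce elements of $R^*$ that still annihilate $a$ and $b$ (because $\Ann(a)$ is an $\ell$-ideal) and sum to the unit $1$ of $R^*$. This truncation argument buys a complete proof of that direction where the paper's scaling trick does not obviously suffice. Two cosmetic points: your ``third observation'' is stated in the direction from $R^*$ to $R$, whereas in the backward direction of (1) you actually invoke the (trivial) converse $\Ann_{R^*}(c)\subseteq\Ann_R(c)=0$; and the inequality $|a|\le 1+a^2$ is best justified from $0\le(|a|-1)^2$, which gives $|a|\le 2|a|\le 1+a^2$ without dividing by $2$.
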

\begin{proof}
 1) $\RTA$. 
Assume that $R$ is $PW$, $f\in R^*$ and $g\in\Ann_{R^*}(f)$. Since $R^*\sub R$, we have $\Ann_{R^*}(S)\sub\Ann_R(S)$, for every $S\subseteq R^*$ and consequently there exist $h_1\in\Ann_R(f)$ and $h_2\in\Ann_R(g)$ such that $h=h_1^2+h_2^2$ is a regular element of $R$, i.e., $\Ann_R(h)=0$. Set $u=h_1^2+h_2^2+1$. Since $u\geq1$, $\frac{1}{u}\in R$. It is clear that $\frac{h_1^2}{u}\in\Ann_{R^*}(f)$, $\frac{h_2^2}{u}\in\Ann_{R^*}(g)$, and their sum $\frac{h_1^2}{u}+\frac{h_2^2}{u}$ is a regular element in $R^*$. Thus $R^*$ is $PW$.

(1$\LTA$. Suppose that $R^*$ is $PW$, $f\in R$ and $g\in\Ann(f)$. Then \[f/(1+|f|), \quad  g/(1+|g|)\in R^*.\] Moreover, \[g/(1+|g|)\in\Ann_{R^*}(f/(1+|f|)).\] By hypothesis, there exist $h_1\in\Ann_{R^*}(f/(1+|f|))$ and $h_2\in\Ann_{R^*}(g/(1+|g|)$ such that $\Ann_{R^*}(h_1^2+ h_2^2)=0$. This implies that \[h_1\in \Ann(f),  h_2\in\Ann(g)\quad \text{and}\quad \Ann(h_1^2+ h_2^2)=0.\] So we are done.

(2$\RTA$). Suppose $R$ is $UPW$. Let $f\in R^*$ and $g\in\Ann_{R^*}(f)$. Then $g\in \Ann(f)$. By hypothesis, there exist $h_1\in\Ann(f)$ and $h_2\in\Ann(g)$ such that $h=h_1+ h_2$ is a unit element in $R$. Set $v= 1+|h_1|+ |h_1|$. Then,  $h_1/ v\in\Ann_{R^*}(f)$ and $h_2/v\in\Ann_{R^*}(g)$ and $(h_1 + h_2)/v$ is a unit element in $R^*$. Thus, $\Ann_{R^*}(f)+\Ann_{R^*}(g)=R^*$, showing that $R^*$ is a $UPW$.

(2$\LTA$). Assume that $R^*$ is $UPW$, and let $f\in R$ and $g\in\Ann(f)$. Then \[f/(1+|f|), g/(1+|g|)\in R^*\]with\[g/(1+|g|)\in\Ann_{R^*}(f/(1+|f|)).\] By assumption, there exist $h_1\in\Ann_{R^*}(f/(1+|f|))$ and $h_2\in\Ann_{R^*}(g/(1+|g|)$ such that $h_1+ h_2$ is unit in $R^*$. Clearly, $h_1\in \Ann(f)$,  $h_2\in\Ann(g)$ and $h_1+ h_2$ is unit in $R$. Hence, $R$ is $UPW$.
\end{proof}
\begin{thm}\label{sot} The following statements are equivalent. 
\begin{enumerate} 
\item $C(X)$ is $PW$. 
\item The space $X$ is a $WCF$-space. \item $C^*(X)$ is $PW$. 
\end{enumerate} 
\end{thm}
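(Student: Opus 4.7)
My plan is to prove the cycle by first establishing the equivalence (1)$\LRTA$(2) directly via the dictionary between cozero-sets and principal ideals in $C(X)$, and then to derive (1)$\LRTA$(3) as an immediate consequence of Proposition~\ref{Du}, since $C(X)$ is a reduced $f$-ring with bounded inversion whose bounded part is exactly $C^{*}(X)$.

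The translation for (1)$\LRTA$(2) rests on three routine facts that I would record at the start. First, for $a,b\in C(X)$ one has $aC(X)\cap bC(X)=0$ iff $\coz(a)\cap\coz(b)=\tohi$: the forward direction uses $h=ar=bs$ forces $\coz(h)\sub\coz(a)\cap\coz(b)$, and the reverse uses the witness $h=a^{2}b^{2}\in aC(X)\cap bC(X)$, which is nonzero on any point of $\coz(a)\cap\coz(b)$. Second, an element $c\in C(X)$ is regular iff $\coz(c)$ is dense in $X$, equivalently $\rint Z(c)=\tohi$; this follows from complete regularity, which supplies a nonzero function supported in any nonempty open subset of $Z(c)$. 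Third, $u\in\Ann(a)$ iff $\coz(a)\sub Z(u)$. With these in hand, the equivalence will be essentially a matter of relabeling. For (2)$\RTA$(1): given $a,b$ with $aC(X)\cap bC(X)=0$, the disjoint cozero-sets $\coz(a),\coz(b)$ are separated by zero-sets $Z_{1}=Z(u),Z_{2}=Z(v)$ with $\rint Z_{1}\cap\rint Z_{2}=\tohi$; then $u\in\Ann(a)$, $v\in\Ann(b)$, and $c=u^{2}+v^{2}$ satisfies
\[
\rint Z(c)=\rint(Z(u)\cap Z(v))=\rint Z_{1}\cap\rint Z_{2}=\tohi,
\]
so $c$ is regular and witnesses $PW$. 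Conversely, for (1)$\RTA$(2): given disjoint cozero-sets $\coz(a),\coz(b)$, we have $aC(X)\cap bC(X)=0$, so there is a regular $c=u+v$ with $u\in\Ann(a)$, $v\in\Ann(b)$; then $Z_{1}=Z(u)\sup\coz(a)$, $Z_{2}=Z(v)\sup\coz(b)$, and since $Z(u)\cap Z(v)\sub Z(c)$, the identity $\rint Z_{1}\cap\rint Z_{2}=\rint(Z_{1}\cap Z_{2})\sub\rint Z(c)=\tohi$ completes the argument.

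For (1)$\LRTA$(3), since $C(X)$ is a reduced $f$-ring with bounded inversion and $C(X)^{*}=C^{*}(X)$, Proposition~\ref{Du}(1) yields the equivalence at once. Alternatively, one can invoke Corollary~\ref{1}(1) together with the already-established (1)$\LRTA$(2): $X$ is $WCF$ iff $\beta X$ is $WCF$ iff $C(\beta X)$ is $PW$, and $C(\beta X)=C^{*}(X)$. The main obstacle in the argument is not really difficult but must be handled carefully: one must verify $(Z_{1}\cap Z_{2})^{\cc}=Z_{1}^{\cc}\cap Z_{2}^{\cc}$ (a general topological identity for interiors of intersections) and confirm that membership of $u^{2}+v^{2}$ rather than merely $u+v$ in $\Ann(a)+\Ann(b)$ is legitimate, which is immediate since both $\Ann(a)$ and $\Ann(b)$ are ideals. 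Beyond these bookkeeping steps, everything reduces to the three translation facts and the cited propositions.
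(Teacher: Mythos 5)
Your proposal is correct and follows essentially the same route as the paper: translate $aC(X)\cap bC(X)=0$ into disjointness of cozero-sets, translate regularity of $c$ into $\rint Z(c)=\tohi$, pass between $\Ann(a)+\Ann(b)$ and the separating zero-sets via $u^{2}+v^{2}$, and obtain $(1)\Leftrightarrow(3)$ from Proposition~\ref{Du}. The only difference is that you make explicit the translation facts (regular $=$ dense cozero-set, etc.) that the paper uses silently, which is a harmless refinement rather than a different argument.
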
 \begin{proof} 
(1)$\Rightarrow$(2) Let $Coz(f)$ and $Coz(g)$ be two disjoint cozero-sets in $X$. Then $fg=0$. By hypothesis, $\Ann(f)+\Ann(g)$ contains a regular element, i.e., there exists $h\in \Ann(f)+\Ann(g)$ such that $\Ann(h)=0$. Thus, there are $h_1\in\Ann(f)$ and $h_2\in\Ann(g)$ such that $h=h_1+ h_2$. This implies that: \[\rint Z(h_1^2+ h_2^2)=\rint Z(h_1)\cap\rint Z(h_2)\subseteq\rint Z(h_1+ h_2)=\emptyset.\] On the other hand, $h_1\in \Ann(f)$ implies that $h_1f=0$, i.e., $Coz(f)\subseteq Z(h_1)$ and similarly $Coz(g)\subseteq Z(h_2)$. Therefore, $X$ is a $WCF$-space.

 (2)$\Rightarrow$(1) Let $f, g\in C(X)$ with $fg=0$. Then $Coz(f)\cap Coz(g)=\emptyset$. By hypothesis, there are two zero-sets $Z(f_1)$ and $Z(f_2)$ such that $Coz(f)\subseteq\rint Z(f_1)$, $Coz(g)\subseteq\rint Z(g_1)$ and $\rint Z(f_1^2+ f_2^2)=\rint Z(f_1)\cap\rint Z(g_1)=\emptyset$. Thus, $ff_1=0$, $gg_1=0$ and $\Ann(f_1^2+ g_1^2)=0$. Hence, $f_1^{2} +g_1^{2}\in \Ann(f)+\Ann(g)$ and $\Ann(f_1^2+ g_1^2)=0$. Thus, $C(X)$ is a $PW$-ring. 
 
 (1)$\Leftrightarrow$(3) This follows from Proposition \ref{Du}. 
\end{proof} 
Now, we want to characterize the co-normality of the lattice $BZ^{\circ}(R)$ in the class of reduced $f$-rings with bounded inversion. 
\begin{prop}\label{2} 
Let $R$ be a reduced $f$-ring with bounded inversion. Then the lattice $BZ^{\circ}(R)$ is co-normal \ifif $R$ is $PW$-wedded. 
\end{prop}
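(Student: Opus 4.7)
The plan is to translate co-normality of $BZ^{\circ}(R)$ into a ring-theoretic condition that matches the $PW$-property directly. Recalling that the operations are $P_a\wedge P_b=P_{ab}$ and $P_a\vee P_b=P_{a^2+b^2}$, with bottom $P_0=0$ and top $P_1=R$, I begin with two standing identifications in the reduced ring $R$: since $c\in P_c$, one has $P_c=0$ iff $c=0$; and $P_c=R$ iff no minimal prime contains $c$, iff $c$ is regular. Combined with the reduced-ring equivalence $aR\cap bR=0\Leftrightarrow ab=0$ (the nontrivial direction: any $c=ar=bs$ satisfies $c^2=rs(ab)=0$, whence $c=0$ by reducedness), co-normality of $BZ^{\circ}(R)$ becomes exactly the statement that for every $a,b\in R$ with $ab=0$, there exist $x,y\in R$ with $x^2+y^2$ regular, $xa=0$, and $yb=0$.

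For the $(\Leftarrow)$ direction, given $a,b$ with $ab=0$, the $PW$-hypothesis yields a regular $h=x+y$ with $x\in\Ann(a)$ and $y\in\Ann(b)$. The core verification is that $x^2+y^2$ is also regular: if $(x^2+y^2)c=0$, multiplying by $c$ gives $(xc)^2+(yc)^2=0$; since squares are positive in the $f$-ring and a sum of two positive elements vanishes only when each does, $(xc)^2=(yc)^2=0$, and reducedness yields $xc=yc=0$, so $(x+y)c=0$, whence $c=0$ by regularity of $x+y$. Then $P_x$ and $P_y$ witness co-normality in the lattice. For the $(\Rightarrow)$ direction, given $a,b$ with $ab=0$, co-normality supplies $u,v$ with $u^2+v^2$ regular, $ua=0$, and $vb=0$; then $u^2=u\cdot u\in\Ann(a)$ and $v^2\in\Ann(b)$, so $u^2+v^2$ is a regular element of $\Ann(a)+\Ann(b)$, proving $R$ is $PW$.

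The main obstacle is the passage from regularity of $x+y$ to regularity of $x^2+y^2$ in the $(\Leftarrow)$ direction. This is where both the $f$-ring structure (positivity of squares and the fact that in a lattice-ordered group a sum of two positives vanishes only trivially) and reducedness (to collapse $(xc)^2=0$ to $xc=0$) are essential; the argument would break down for a general reduced ring. The remainder of the proof is a straightforward matching of lattice operations with ring identities, and the bounded inversion hypothesis enters only through the running setup of the lattice $BZ^{\circ}(R)$ and is not invoked separately in the argument.
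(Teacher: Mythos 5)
Your proof is correct and follows essentially the same route as the paper: both directions translate $P_a\wedge P_b=0$ into $ab=0$ and exchange a regular $x+y$ (with $x\in\Ann(a)$, $y\in\Ann(b)$) for the lattice witnesses $P_x,P_y$ via regularity of $x^2+y^2$. The only difference is presentational: you spell out the $f$-ring positivity argument showing $x^2+y^2$ is regular, whereas the paper invokes the identity $\Ann(c^2+d^2)=\Ann(c)\cap\Ann(d)$ without proof.
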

\begin{proof} $\Rightarrow$ Let $a, b\in R$ with $ab=0$. Then $P_a\wedge P_b=P_a\cap P_b=P_{ab}=P_{0}=0$, since $R$ is a reduced ring. By the hypothesis, there are $c, d\in R$ such that: \[P_a\wedge P_c=P_b\wedge P_d=0\quad\text{and}\quad P_c\vee P_d=1.\] This implies $P_{ac}=P_{bd}=0$, i.e., $ac=bd=0$ and $P_{c^2+ d^2}=1$. Hence, $c^2\in\Ann(a)$, $d^2\in\Ann(b)$ and $P_{c^2+ d^2}=1$ implies $\Ann(c^2+ d^2)=0$. Thus, $c^2+ d^2\in \Ann(a)+ \Ann(b)$ is a regular element. 

$\Leftarrow$ Consider two elements $P_a, P_b$ of $BZ^{\circ}(R)$ with $P_a\wedge P_b=0$. Then $P_{ab}=P_a\cap P_b=0$ and hence $ab=0$ in $R$. By hypothesis, $\Ann(a)+ \Ann(b)$ contains a regular element. Hence, there are $c\in\Ann(a)$ and $d\in\Ann(b)$ such that $\Ann(c+ d)=0$. Thus, $ac=0$, $bd=0$ and $\Ann(c^2+ d^2)=\Ann(c)\cap\Ann(d)\subseteq\Ann(c+ d)=0$. This implies $P_a\wedge P_c=P_a\cap P_c=P_0=0$, $P_b\wedge P_d=P_b\cap P_d=P_0=0$ and $P_c\vee P_d=P_{c^2+ d^2}=1$. Therefore, $BZ^{\circ}(R)$ is a co-normal lattice. \end{proof}
 \begin{prop}\label{es2}
 Let $R$ be a semiprimitive $f$-ring  with bounded inversion. Then the lattice $BZ(R)$ is co-normal \ifif $R$ is $UPW$.
 \end{prop}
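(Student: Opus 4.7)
My plan is to mirror the proof of Proposition~\ref{2}, passing from $BZ^{\circ}(R)$ to $BZ(R)$ and from ``regular element'' to ``unit element.'' Three facts will be used throughout. First, $R$ is reduced: in a semiprimitive ring, $x^2=0$ puts $x$ inside every maximal ideal, hence $x\in J(R)=0$. Second, $M_f=0$ iff $f=0$ (using $J(R)=0$), while $M_f=R$ (the top of the lattice) iff $f$ is a unit of $R$ (by the empty-intersection convention). Third, and this is the only ingredient that does not already appear in the proof of Proposition~\ref{2}, every quotient $R/M$ at a maximal ideal $M$ is a \emph{formally real} field: bounded inversion makes $1+x^2\geq 1$ a unit of $R$, hence nonzero in $R/M$, so $-1$ is not a sum of squares in $R/M$. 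Consequently $x^2+y^2\in M$ forces $x,y\in M$. This third fact is precisely what lets me convert ``$c+d$ is a unit'' into ``$c^2+d^2$ is a unit.''

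For the forward implication, assume $BZ(R)$ is co-normal and take $a,b\in R$ with $aR\cap bR=0$. Reducedness forces $ab=0$, so $M_a\wedge M_b=M_{ab}=M_0=0$. Co-normality then supplies $c,d\in R$ with $M_a\wedge M_c=M_b\wedge M_d=0$ and $M_c\vee M_d=1$, that is, $ac=bd=0$ and $c^2+d^2$ is a unit. Since $ac=0$ and $bd=0$ give $c^2\in\Ann(a)$ and $d^2\in\Ann(b)$, the unit $c^2+d^2$ lies in $\Ann(a)+\Ann(b)$; hence this sum equals $R$ and $R$ is $UPW$.

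For the converse, assume $R$ is $UPW$ and pick $M_a,M_b\in BZ(R)$ with $M_a\wedge M_b=0$. Then $M_{ab}=0$, so $ab=0$, and reducedness again yields $aR\cap bR=0$, so the $UPW$ hypothesis provides $c\in\Ann(a)$ and $d\in\Ann(b)$ with $c+d$ a unit. The main obstacle is to verify that $M_c\vee M_d=M_{c^2+d^2}$ equals the top of the lattice: if some maximal ideal $M$ were to contain $c^2+d^2$, then by the formally real property of $R/M$ we would have $c,d\in M$ and hence $c+d\in M$, contradicting that $c+d$ is a unit. Therefore $c^2+d^2$ is a unit, and this, together with $M_a\wedge M_c=M_{ac}=0$ and $M_b\wedge M_d=M_{bd}=0$, witnesses the co-normality of $BZ(R)$.
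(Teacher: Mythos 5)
Your proof is correct and follows essentially the same route as the paper's: translate $M_a\wedge M_b=0$ into $ab=0$ and back, and match the co-normality witnesses $M_c,M_d$ with a unit in $\Ann(a)+\Ann(b)$ via $c^2\in\Ann(a)$, $d^2\in\Ann(b)$. The only point where you go beyond the paper is the formally-real-residue-field argument showing that $c^2+d^2$ is a unit whenever $c+d$ is; the paper simply writes $M_{c^2+d^2}=M_{c+d}=R$ without justification, so your explicit verification fills a real (if small) gap rather than taking a different approach.
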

 \begin{proof}
 $\Rightarrow$ Let $a, b\in R$ with $ab=0$. Then $M_a\wedge M_b=M_a\cap M_b=M_{ab}=M_{0}=0$, since $R$ is a semiprimitive ring. By the hypothesis, there are $c, d\in R$ such that: \[M_a\wedge M_c=M_b\wedge M_d=0\quad\text{and}\quad M_c\vee M_d=1.\] This implies $M_{ac}=M_{bd}=0$, i.e.,  $ac=bd=0$ and $M_{c^2+ d^2}=1$. Hence, $c^2\in\Ann(a)$, $d^2\in\Ann(b)$ and $M_{c^2+ d^2}=1$. Thus, $c^2+ d^2\in \Ann(a)+ \Ann(b)$ is a unit  element, i.e., $R$ is a $UW$-ring.
 
 $\Leftarrow$ Consider two elements $M_a, M_b$ of $BZ(R)$ with $M_a\wedge M_b=0$. Then $M_{ab}=M_a\cap M_b=0$ and hence $ab=0$ in $R$. By hypothesis, $\Ann(a)+ \Ann(b)$ contains a unit element. Hence, there are $c\in\Ann(a)$ and $d\in\Ann(b)$ such that $c+ d$ is unit. Thus, $ac=0$, $bd=0$ and $c+d$ is unit. This implies that: \[M_a\wedge M_c=M_a\cap M_c=M_{ac}=M_0=0,\] \[M_b\wedge M_d=M_b\cap M_d=M_{bd}=M_0=0,\quad\text{and}\]  \[M_c\vee M_d=M_{c^2+ d^2}=M_{c+d}=R.\]Therefore, $BZ(R)$ is a co-normal lattice.
 \end{proof}
 From Theorems \ref{sot}, Theorem 3.6 in \cite{AlTaTa},  Propositions \ref{2} and \ref{es2}, we deduce the next result.
 \begin{cor}
 Let $X$ be a completely regular Hausdorff space. 
 \begin{enumerate}
 \item The lattice $BZ^{\circ}(C(X))$ is co-normal \ifif $X$ is a $WCF$-space.
 
 \item The lattice $BZ(C(X))$ is co-normal \ifif $X$ is an $F$-space.
 \end{enumerate}
 \end{cor}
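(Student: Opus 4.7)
The plan is to obtain this corollary as a direct application of the algebraic characterizations already established, namely Propositions \ref{2} and \ref{es2}, combined with the topological characterizations in Theorem \ref{sot} and Corollary \ref{exa} (which rests on Theorem 3.6 of \cite{AlTaTa}). The only thing to check before invoking these results is that $R = C(X)$ satisfies the hypotheses required by each of the two propositions, and this is routine: $C(X)$ is a reduced $f$-ring with bounded inversion, as recorded in Section~2; moreover, for a completely regular Hausdorff space $X$ the points of $X$ separate via continuous functions, so the intersection of all maximal ideals of $C(X)$ vanishes, i.e., $C(X)$ is also semiprimitive. Hence both Propositions \ref{2} and \ref{es2} are applicable with $R = C(X)$.

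For part (1), I would argue as follows. By Proposition \ref{2} applied to $R = C(X)$, the lattice $BZ^{\circ}(C(X))$ is co-normal if and only if $C(X)$ is a $PW$-ring. By Theorem \ref{sot}, $C(X)$ is a $PW$-ring if and only if $X$ is a $WCF$-space. Chaining these two equivalences yields the desired statement.

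For part (2), the argument is parallel. By Proposition \ref{es2} applied to the semiprimitive $f$-ring $C(X)$, the lattice $BZ(C(X))$ is co-normal if and only if $C(X)$ is a $UPW$-ring. By Corollary \ref{exa}, $C(X)$ is a $UPW$-ring if and only if $X$ is an $F$-space. Combining these two equivalences gives the conclusion.

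There is no real obstacle in this proof beyond bookkeeping: everything reduces to verifying that $C(X)$ is a reduced semiprimitive $f$-ring with bounded inversion so that the two propositions apply, and then chaining the ring-theoretic equivalences with the topological ones. The slightly delicate point worth highlighting is that part (2) genuinely needs semiprimitivity of $C(X)$ (not just reducedness), which is why the topological side collapses further from the $WCF$ condition in part (1) to the strictly stronger $F$-space condition in part (2).
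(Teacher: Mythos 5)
Your proposal is correct and follows exactly the route the paper takes: the paper deduces this corollary by chaining Proposition~\ref{2} with Theorem~\ref{sot} for part (1), and Proposition~\ref{es2} with Theorem 3.6 of \cite{AlTaTa} (i.e., Corollary~\ref{exa}) for part (2). Your added verification that $C(X)$ is a reduced, semiprimitive $f$-ring with bounded inversion is routine and correctly fills in the hypotheses the paper leaves implicit.
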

We conclude this section with the following example.
\begin{exam}
(1) A $W$-ring need not to be a $UW$-ring. Consider a $WED$-space $X$  which is not an extremally disconnected space (e.g., $\Bbb R$ with the standard topology). Then, by Lemma \ref{nem}, $C(X)$ is a $W$-ring which is not a $UW$-ring.
    
 (2) A $PW$-ring need not to be a $W$-ring. Consider a $WCF$-space $X$ which is not a $WED$-space (e.g., Example \ref{fati}). Then, by  Theorem \ref{sot} and Lemma \ref{nem}, $C(X)$ is a $PW$-ring which is not a $W$-ring.
 
 (3) A $PW$-ring need not be a $UPW$-ring. Consider a $WCF$-space $X$ which is not an $F$-space (e.g., Example 3.7). Then,  by  Theorem \ref{sot} and Corollary \ref{exa}, $C(X)$ is a $PW$-ring which is not a $UPW$-ring.
 \end{exam}   






\end{document}